\newcommand{\kk}{k}
\newcommand{\ZZ}{\mathbb{Z}}
\newcommand{\op}{{\mathrm{op}}}
\newcommand{\GG}[1]{} 
\newcommand{\category}[1]{\mathcal{#1}}
\newcommand{\noloc}{\rotatebox[origin=c]{180}{$\colon$}}
\newcommand{\A}{\category{A}}%
\newcommand{\B}{\category{B}}%
\newcommand{\C}{\category{C}}%
\newcommand{\F}{\category{F}}%
\newcommand{\G}{\category{G}}%
\renewcommand{\S}{\category{S}}%
\newcommand{\T}{\category{T}}%
\NewDocumentCommand{\DerCat}{O{}D<>{}m}{\operatorname{D}^{#1}_{\mathrm{#2}}(#3)}
\NewDocumentCommand{\dgDerCat}{O{}D<>{}m}{\operatorname{D}^{#1}_{\mathrm{#2}}(#3)_{\mathrm{dg}}}
\NewDocumentCommand{\K}{O{}D<>{}m}{\operatorname{K}^{#1}_{\mathrm{#2}}(#3)}
\NewDocumentCommand{\Mod}{m}{\operatorname{Mod}(#1)}
\NewDocumentCommand{\Ch}{m}{\operatorname{Ch}(#1)}
\NewDocumentCommand{\stableCat}{mO{\S}}{\underline{#1}_{#2}}
\NewDocumentCommand{\HC}{O{\bullet}m}{\operatorname{C}^{#1}(#2)}
\RenewDocumentCommand{\H}{O{\bullet}m}{\operatorname{H}^{#1}(#2)}
\NewDocumentCommand{\proj}{m}{\operatorname{proj}(#1)}
\NewDocumentCommand{\mmod}{O{}m}{\operatorname{mod}_{#1}(#2)}
\NewDocumentCommand{\pcoh}{m}{\operatorname{pcoh}(#1)}
\NewDocumentCommand{\Hom}{O{}mm}{\operatorname{Hom}_{#1}(#2,#3)}
\NewDocumentCommand{\Ext}{O{}mmO{\bullet}}{\operatorname{Ext}_{#1}^{#4}(#2,#3)}
\NewDocumentCommand{\End}{O{}m}{\operatorname{End}_{#1}(#2)}
\DeclareMathOperator{\colim}{\operatorname{colim}}
\NewDocumentCommand{\Lotimes}{O{}}{\otimes^{\mathbf{L}}_{#1}}
\NewDocumentCommand{\RHom}{O{}mm}{\mathbf{R}\!\operatorname{Hom}_{#1}(#2,#3)}
\NewDocumentCommand{\REnd}{O{}m}{\mathbf{R}\!\operatorname{End}_{#1}(#2)}
\NewDocumentCommand{\LFun}{O{}mm}{\operatorname{LFun}_{#1}(#2,#3)}
\NewDocumentCommand{\HH}{O{\bullet}m}{\operatorname{HH}^{#1}(#2)}
\NewDocumentCommand{\coh}{O{}m}{\operatorname{coh}_{#1}(#2)}
\NewDocumentCommand{\PP}{O{n}}{\mathbb{P}^{#1}}
\NewDocumentCommand{\set}{mo}{\left\{#1\IfValueT{#2}{\,\middle|\,#2}\right\}}
  \numberwithin{equation}{section}%
  \declaretheorem[style=theorem,sibling=equation]{lemma}%
  \declaretheorem[style=theorem,sibling=equation]{proposition}%
  \declaretheorem[style=theorem,sibling=equation]{conjecture}%
  \declaretheorem[style=theorem,sibling=equation]{theorem}%
  \declaretheorem[style=definition,sibling=equation]{definition}%
  \declaretheorem[style=definition,numbered=no,name=Definition]{definition*}%
  \declaretheorem[style=definition,sibling=equation]{notation}%
  \declaretheorem[style=definition,numbered=no]{acknowledgements}%
  \declaretheorem[style=definition,numbered=no]{conventions}%
  \declaretheorem[style=remark,sibling=equation]{remark}%
  \title[Rickard's derived Morita theory]{Rickard's derived Morita theory: \\ Review and Outlook}
  \author[G.~Jasso]{Gustavo Jasso}%
  \author[H.~Krause]{Henning Krause} \author[S.~Schroll]{Sibylle Schroll}%
  \address[Jasso, Schroll]{%
    Mathematisches Institut, %
    Universität zu Köln, %
    Weyertal 86-90, %
    D-50931 Köln, %
    Germany}%
  \email{gjasso@math.uni-koeln.de}%
  \urladdr{https://gustavo.jasso.info}%
  \email{schroll@math.uni-koeln.de}%
  \urladdr{https://sites.google.com/site/sibylleschroll/}%
  \address[Krause]{ Fakult{\"a}t f{\"u}r Mathematik, %
    Universit{\"a}t Bielefeld, %
    D-33501 Bielefeld, %
    Germany }
  \email{hkrause@math.uni-bielefeld.de}%
  \urladdr{https://www.math.uni-bielefeld.de/~hkrause/}%
\begin{document}

\subjclass{Primary: 16E35, 18G80. Secondary: 20C20}

  \begin{abstract}
    We survey the main results in Jeremy Rickard's seminal papers `Morita theory for derived categories' and `Derived equivalences and derived functors'. These papers catalysed the later development of the Morita theory of (enhanced) compactly generated triangulated categories by Keller in the algebraic setting and by Schwede and Shipley in the topological setting. We also discuss the role of Rickard's notion of splendid equivalence in the context of Broué's abelian defect group conjecture, and indicate an alternative proof of parts of Rickard's Derived Morita Theorem that leverages the notion of completion of a triangulated category.
  \end{abstract}

  \maketitle

  \tableofcontents



\newcommand{\calo}{{\mathcal{O}}}



\section{Introduction}

In a series of seminal papers~\cite{Ric89a,Ric89b,Ric91,Rickard96}
Rickard developed a Morita theory for derived categories.  Given a
pair of associative algebras, classical Morita theory provides
criteria for their module categories to be equivalent. 
The relevance of derived categories became apparent in
the 1980s, and it was Jeremy Rickard who provided an analogue of
Morita's theory in that context. This marked the beginning of an exciting
development that continues to give new insights, because derived
equivalences are often used as bridges between neighbouring fields in
mathematics. Such derived equivalences arise from tilting
objects, and tilting theory, as introduced by Brenner and Butler into the
representation theory of finite dimensional algebras, can be seen as
an inspiration for Rickard's theory.

This paper offers a survey and tries to put Rickard's work into a
broader context. We proceed in historical order from classical Morita theory via tilting to Rickard's derived Morita theory. 
The discussion emphasises the categorical aspects and moves on to Brou\'e’s abelian defect
group conjecture, including some of the main ideas in Rickard’s early work in this context. 
Then we focus on more recent aspects which arise from the triangulated structure of a derived category. For instance, there is a notion
of completion which provides a new interpretation of some of Rickard's results. Also, there are further classes of triangulated categories
beyond the derived categories of associative algebras, where a Morita theory has been developed. So we see that Rickard's work continues to inspire new developments far beyond its origins in representation theory.

\begin{acknowledgements}
    The authors are grateful to Bernhard Keller and Jeremy Rickard for numerous helpful comments on a previous version of this article. The authors thank Yankı Lekili for help with some references on Homological Mirror Symmetry and Radha Kessar for advice on results regarding  Brou\'e's abelian defect group conjecture. Furthermore, the authors thank Radha Kessar, Calvin Pfeifer and David Ploog for their comments and corrections.  The authors would also like to thank the anonymous referees whose thoughtful and detailed comments much improved this article. This work was supported by the Deutsche Forschungsgemeinschaft (SFB-TRR 358/1 2023 - 491392403).
\end{acknowledgements}

\begin{conventions}
Unless explicitly noted otherwise, we work over a commutative (unital) ring $k$, so that all algebras and categories that we consider are linear over $k$.  For an algebra $A$, we write $\Mod{A}$ for the abelian category of right $A$-modules and $\mmod{A}$ for its full subcategory of \emph{finitely presented} $A$-modules, that is $A$-modules that are cokernels of a morphism between finitely generated projective $A$-modules; notice that $\mmod{A}$ is a $k$-linear category with cokernels but in general it is not an abelian category. Finally, we write $\proj{A}$ for the idempotent-complete\footnote{An additive category is \emph{idempotent-complete} if every idempotent endomorphism induces a direct sum decomposition.} $k$-linear category of finitely generated projective $A$-modules.
\end{conventions}

\section{Rickard's derived Morita theory}

In this section we describe some of the main results in Rickard's papers~\cite{Ric89a,Ric89b,Ric91,Rickard96}. We begin with a cursory discussion of classical Morita theory emphasising its categorical aspects. This discussion is followed by a brief history of tilting theory, which serves as a bridge between classical Morita theory and Rickard's derived version thereof that we discuss afterwards. We conclude by recalling Brou{\'e}'s abelian defect group conjecture~\cite{Broue90} and some of the main ideas in Rickard's early work in this context.

\subsection{Classical Morita theory}

The following is one of the most basic questions in representation theory: Given two algebras $A$ and $B$, when are the module categories $\Mod{A}$ and $\Mod{B}$ equivalent as $\kk$-linear categories? This question was given a satisfactory answer by Morita in~\cite{Mor58}. Recall that an $A$-$B$-bimodule is a $k$-module $M$ with a left $A$-module structure and a right $B$-module structure that are compatible in the sense that
\[
    (a\cdot m)\cdot b = a\cdot (m\cdot b),\qquad a\in A,\ m\in M,\ b\in B.
\]

\begin{theorem}[Morita]
\label{thm:Morita}
    Let $A$ and $B$ be a pair of algebras. The following statements are equivalent.
    \begin{enumerate}
        \item \label{thm:Morita:it:eq} There exists a $\kk$-linear equivalence of categories
        \[
            \Mod{A}\stackrel{\sim}{\longrightarrow}\Mod{B}.
        \]
        \item\label{thm:Morita:it:std-eq} There exists an $A$-$B$-bimodule $M$ such that the $\kk$-linear functor
        \[
            -\otimes_A M\colon\Mod{A}\longrightarrow\Mod{B}
        \]
        is an equivalence.
        \item\label{thm:Morita:it:progen} There exists a finitely generated projective $B$-module $P$ with the following properties:
        \begin{enumerate}
            \item  There is an isomorphism of algebras
            \[
            A\cong\Hom[B]{P}{P}.
            \]
            \item The finitely generated projective $B$-module $P$ is a \emph{progenerator}, that is $P$ generates $\proj{B}$ as an idempotent complete additive category. Equivalently, the regular representation of $B$ is a direct summand of a finite direct sum of copies of $P$.
        \end{enumerate}
    \end{enumerate}
\end{theorem}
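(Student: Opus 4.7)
The plan is to prove the cycle of implications $(2) \Rightarrow (1) \Rightarrow (3) \Rightarrow (2)$, referring to the three statements of the theorem. The first step, $(2) \Rightarrow (1)$, is immediate: the tensor product functor $-\otimes_A M$ is $\kk$-linear by construction.

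For $(1) \Rightarrow (3)$, I would fix a $\kk$-linear equivalence $F \colon \Mod{A} \stackrel{\sim}{\to} \Mod{B}$ and set $P := F(A)$, where $A$ denotes the regular right $A$-module. The essential observation is that the three properties relevant to (3) — finite generation, projectivity, and the generator property — all admit intrinsic categorical characterisations: compactness with respect to filtered colimits, the preservation of epimorphisms by $\Hom[A]{A}{-}$, and the faithfulness of $\Hom[A]{A}{-}$, respectively. Since $F$ is an equivalence, it transports these properties from $A$ to $P$. To upgrade the generator property to the progenerator property, I would choose an epimorphism $P^{\oplus n} \twoheadrightarrow B$ for some $n \geq 1$ — possible because $B$ is finitely generated over itself and $P$ is a generator — and split it using the projectivity of $B$. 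The isomorphism $A \cong \Hom[B]{P}{P}$ then follows from applying the fully faithful functor $F$ to the canonical identification $A \cong \End[A]{A}$.

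For the final implication $(3) \Rightarrow (2)$, I would take $M := P$, endowed with its right $B$-action and with the left $A$-action obtained from the algebra isomorphism $A \cong \Hom[B]{P}{P}$. The functor $-\otimes_A P$ is left adjoint to $\Hom[B]{P}{-}$, and I would verify that the unit $\eta$ and counit $\varepsilon$ of this adjunction are invertible by first treating the base cases: the component $\eta_A$ is precisely the given algebra isomorphism $A \cong \Hom[B]{P}{P}$, and $\varepsilon_P$ is trivially an isomorphism. To pass to the general case, I would exploit that both functors preserve all small colimits — the left adjoint $-\otimes_A P$ by general principles, and $\Hom[B]{P}{-}$ because $P$ is compact, being finitely generated and projective — so that the classes of modules for which $\eta$ (respectively $\varepsilon$) is invertible are closed under colimits.

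The main obstacle will be showing, for $(3) \Rightarrow (2)$, that these good classes exhaust $\Mod{A}$ and $\Mod{B}$ respectively. For $\Mod{A}$ this is routine, since every $A$-module is a coequaliser of free modules and the free modules lie in the colimit-closure of $A$. For $\Mod{B}$, this is precisely where the progenerator property of $P$ enters crucially: it ensures that $B$ is a direct summand of $P^{\oplus n}$, and retracts arise as colimits over idempotent endomorphisms, so every $B$-module — being a colimit of copies of $B$ — lies in the colimit-closure of $P$.
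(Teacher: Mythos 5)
Your proof is correct. The paper's remark sketches the same ingredients plus a direct proof of \eqref{thm:Morita:it:eq}$\Rightarrow$\eqref{thm:Morita:it:std-eq} via the Eilenberg--Watts theorem; by instead closing the loop \eqref{thm:Morita:it:std-eq}$\Rightarrow$\eqref{thm:Morita:it:eq}$\Rightarrow$\eqref{thm:Morita:it:progen}$\Rightarrow$\eqref{thm:Morita:it:std-eq} you render Eilenberg--Watts unnecessary, at the price of having to supply the details of \eqref{thm:Morita:it:progen}$\Rightarrow$\eqref{thm:Morita:it:std-eq} yourself. Your treatment of that last implication---verifying $\eta_A$ and $\varepsilon_P$ directly, then propagating to all modules using that $-\otimes_A P$ and $\Hom[B]{P}{-}$ preserve all small colimits (the latter because $P$ is finitely generated projective), with the progenerator hypothesis entering precisely to put $B$ in the colimit closure of $P$ as a retract of some $P^{\oplus n}$---is exactly the standard argument the paper leaves implicit, so the two routes carry the same mathematical content. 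One small imprecision worth noting: preservation of filtered colimits by $\Hom[A]{M}{-}$ characterises finitely \emph{presented} modules, not finitely generated ones; since you pair it with projectivity, and finitely generated projective coincides with finitely presented projective, your argument is unaffected, but the characterisation of finite generation alone requires filtered colimits along monomorphisms.
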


\begin{remark}
    The implication \eqref{thm:Morita:it:std-eq}$\Rightarrow$\eqref{thm:Morita:it:eq} in \Cref{thm:Morita} is clear, and the converse implication \eqref{thm:Morita:it:eq}$\Rightarrow$\eqref{thm:Morita:it:std-eq} follows readily from the classical Eilenberg--Watts Theorem~\cite{Eil60,Wat60}, which characterises the $\kk$-linear functors that are given by the tensor product with a bimodule. The implication \eqref{thm:Morita:it:progen}$\Rightarrow$\eqref{thm:Morita:it:std-eq} follows by considering the adjunction
    \[
    \begin{tikzcd}
    -\otimes_A P\colon\Mod{A}\rar[shift left]&\Mod{B}\noloc\Hom[B]{P}{-},\lar[shift left]
    \end{tikzcd}
    \]
    which is well defined since $A\cong\Hom[B]{P}{P}$ as algebras. Finally, the implication \eqref{thm:Morita:it:eq}$\Rightarrow$\eqref{thm:Morita:it:progen} follows from the following observation: An $A$-module $P$ is finitely generated and projective if and only if the $\kk$-linear functor
    \[
    \Hom[A]{P}{-}\colon\Mod{A}\longrightarrow\Mod{\kk}
    \]
    preserves all small colimits. Consequently, every equivalence between $\Mod{A}$ and $\Mod{B}$ restricts to an equivalence between $\proj{A}$ and $\proj{B}$, and we may take $P$ to be the image of $A$ under the latter.
\end{remark}

By definition, two algebras are \emph{Morita equivalent} if their module categories are equivalent as $k$-linear categories. 
At the risk of stating the obvious, \Cref{thm:Morita} reduces the question of when two algebras $A$ and $B$ are Morita equivalent to the problem of finding a progenerator for $B$ whose endomorphism algebra is isomorphic to $A$, which in principle seems like a more tractable problem.
The most basic example of Morita equivalent algebras is given by the ground commutative ring $\kk$ itself and the algebra of $n\times n$ matrices with entries in $\kk$, where $n\geq1$ (indeed, the latter algebra is the endomorphism algebra of the free $\kk$-module $\kk^n$). It is easy to show that Morita equivalence is preserved by the passage to the opposite algebras, hence this notion is left-right symmetric.\footnote{In general, we say that a property of an algebra $A$ is left-right symmetric when this property holds for $A$ if and only if it holds for $A^\op$.}

Morita equivalence can be detected on certain subcategories of the module category. Recall that an $A$-module $M$ is finitely presented if and only if the $k$-linear functor 
    \[
        \Hom[A]{M}{-}\colon\Mod{A}\longrightarrow\Mod{\kk}
    \]
    preserves filtered colimits.
It follows that every $\kk$-linear equivalence
\[
    \Mod{A}\stackrel{\sim}{\longrightarrow}\Mod{B}
\]
induces an equivalence
\[
    \mmod{A}\stackrel{\sim}{\longrightarrow}\mmod{B}.
\]
Recall also that $\mmod{A}$ is obtained from the category $\proj{A}$ by freely adjoining cokernels~\cite[III.2, Corollary]{Aus71}, and that $\Mod{A}$ is obtained from $\mmod{A}$ by freely adjoining filtered colimits, see for example~\cite[Exercise~6.8]{KS06}. This explains the following theorem:

\begin{theorem}
    \label{thm:Morita_subcategories}
    Let $A$ and $B$ be a pair of $\kk$-algebras. The following statements are equivalent.
    \begin{enumerate}
    \item There exists a $\kk$-linear equivalence of categories
        \[
            \Mod{A}\stackrel{\sim}{\longrightarrow}\Mod{B}.
        \]
    \item There exists a $\kk$-linear equivalence of categories
        \[
            \mmod{A}\stackrel{\sim}{\longrightarrow}\mmod{B}.
        \]
    \item There exists a $\kk$-linear equivalence of categories
        \[
            \proj{A}\stackrel{\sim}{\longrightarrow}\proj{B}.
        \]
    \end{enumerate}
\end{theorem}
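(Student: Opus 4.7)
The plan is to establish the circular chain $(1) \Rightarrow (2) \Rightarrow (3) \Rightarrow (1)$, each implication being obtained from an intrinsic characterization of the subcategory being passed to. For $(1) \Rightarrow (2)$, I would use the characterization recalled immediately before the theorem: $M \in \Mod{A}$ is finitely presented if and only if $\Hom[A]{M}{-}$ preserves filtered colimits. Any $\kk$-linear equivalence $\Mod{A} \stackrel{\sim}{\longrightarrow} \Mod{B}$ preserves arbitrary colimits, so this intrinsic property transfers across the equivalence, which therefore restricts to an equivalence $\mmod{A} \stackrel{\sim}{\longrightarrow} \mmod{B}$.

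For $(2) \Rightarrow (3)$, I would characterize $\proj{A}$ intrinsically inside $\mmod{A}$ as the full subcategory of those $P$ for which every epimorphism $M \twoheadrightarrow P$ in $\mmod{A}$ admits a section. A finitely generated projective module clearly has this property; conversely, any such $P$ is finitely generated (being a cokernel of a morphism in $\proj{A}$), so there exists an epimorphism $A^n \twoheadrightarrow P$ in $\mmod{A}$ for some $n \geq 1$, and a section exhibits $P$ as a summand of $A^n$, hence as finitely generated projective. Since any $\kk$-linear equivalence $\mmod{A} \stackrel{\sim}{\longrightarrow} \mmod{B}$ preserves epimorphisms and sections, it restricts to an equivalence $\proj{A} \stackrel{\sim}{\longrightarrow} \proj{B}$.

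For $(3) \Rightarrow (1)$, I would appeal to \Cref{thm:Morita}. Given an equivalence $G \colon \proj{A} \stackrel{\sim}{\longrightarrow} \proj{B}$, set $P \coloneqq G(A)$. Then $A \cong \End[A]{A} \cong \End[B]{P}$ as algebras, and since the regular representation $A$ generates $\proj{A}$ as an idempotent-complete additive category, its image $P = G(A)$ generates $\proj{B}$ in the same sense, so $P$ is a progenerator. The implication \eqref{thm:Morita:it:progen}$\Rightarrow$\eqref{thm:Morita:it:eq} of \Cref{thm:Morita} then furnishes a $\kk$-linear equivalence $\Mod{A} \stackrel{\sim}{\longrightarrow} \Mod{B}$.

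The main potential obstacle is the step $(2) \Rightarrow (3)$: because $\mmod{A}$ is in general only additive with cokernels rather than abelian, some care is required in formulating "projectivity" intrinsically within $\mmod{A}$. The argument above succeeds because every finitely generated module receives an epimorphism from a free module of finite rank, so the requisite splittings can be detected entirely inside $\mmod{A}$. A more conceptual alternative would be to invoke the universal properties mentioned in the paragraph preceding the statement—$\mmod{A}$ as the free cokernel completion of $\proj{A}$, and $\Mod{A}$ as the free filtered-colimit completion of $\mmod{A}$—which would make the implications $(3) \Rightarrow (2) \Rightarrow (1)$ purely formal at the cost of appealing to more categorical machinery.
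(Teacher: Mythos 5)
Your argument is correct, but it takes a different route from the one the paper intends. The paper's justification is the paragraph preceding the statement: it combines the intrinsic characterisation of finitely presented modules (which you also use for (1)$\Rightarrow$(2)) with two universal properties, namely that $\mmod{A}$ is obtained from $\proj{A}$ by freely adjoining cokernels~\cite{Aus71} and that $\Mod{A}$ is obtained from $\mmod{A}$ by freely adjoining filtered colimits~\cite{KS06}, so that an equivalence at the level of $\proj{}$, respectively $\mmod{}$, extends formally to the larger categories, yielding (3)$\Rightarrow$(2)$\Rightarrow$(1); in the downward direction the finitely generated projectives are singled out inside $\Mod{A}$ as the modules $P$ for which $\Hom[A]{P}{-}$ preserves all small colimits, as in the remark following \Cref{thm:Morita}. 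You instead close the circle with two concrete arguments: an intrinsic description of $\proj{A}$ inside $\mmod{A}$ (the objects onto which every epimorphism of $\mmod{A}$ splits) for (2)$\Rightarrow$(3), and a reduction to \Cref{thm:Morita} via the progenerator $P=G(A)$ for (3)$\Rightarrow$(1). Both approaches work; yours is more elementary and self-contained, needing only \Cref{thm:Morita} and basic module theory, while the completion viewpoint is more conceptual and is precisely the perspective the paper later develops for derived categories in \Cref{sec:metrics}. One small point you should make explicit in (2)$\Rightarrow$(3): epimorphisms in $\mmod{A}$ coincide with surjective homomorphisms. This is what guarantees that a finitely generated projective module does have your splitting property, and it holds because $\mmod{A}$ is closed under cokernels in $\Mod{A}$, so an epimorphism in $\mmod{A}$ has vanishing cokernel and is therefore surjective. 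With that observation inserted, your characterisation is manifestly categorical and hence preserved by any $\kk$-linear equivalence, as you claim.
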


    Morita equivalent algebras share many important invariants. For example, Morita equivalent algebras have isomorphic centres, since the centre of an algebra $A$ is isomorphic to the endomorphism algebra of the identity functor of the category $\Mod{A}$, see for example~\cite[Section~III.5.d]{Gab62}. Consequently, two commutative algebras are Morita equivalent if and only if they are isomorphic. Also, Morita equivalent algebras have isomorphic Grothendieck groups and, more generally, isomorphic higher algebraic $K$-theory groups, for these are defined in terms of the category of finitely generated projective modules~\cite[Sections~II.2 and~IV.6]{Wei13}.

Morita equivalences `propagate' under certain constructions. For example, Morita equivalence is preserved by the passage to categories of bimodules; this is a consequence of the following more precise version of the Eilenberg--Watts Theorem (see~\cite[Theorem~3.1]{NS16} for a proof of a more general statement): The functor
\[
    \Mod{A^\op\otimes_k B}\stackrel{\sim}{\longrightarrow}\LFun[\kk]{\Mod{A}}{\Mod{B}},\qquad M\longmapsto (-\otimes_A M),
\]
is an equivalence of $\kk$-linear categories, where the left-hand side is (equivalent to) the category of $A$-$B$-bimodules and the right-hand side denotes the category of cocontinuous $\kk$-linear functors ${\Mod{A}\to\Mod{B}}$. This equivalence can be used to produce many interesting examples of Morita equivalent algebras. Recall that, given an algebra $A$ and an $A$-bimodule $M$, one may form the split square-zero extension $A\ltimes M$, which is the algebra with underlying $\kk$-module $A\oplus M$ and the multiplication
\[
    (x,m)\cdot(x',m')\coloneqq (xx',xm'+x'm),\qquad x,x'\in A,\ m,m'\in M.
\]
It is not difficult to show that the category of $(A\ltimes M)$-modules can be reconstructed from the category $\Mod{A}$ and the endofunctor $-\otimes_A M\colon\Mod{A}\to\Mod{A}$, compare with~\cite[Lemma~10.11]{SY11a}. It follows that a $\kk$-linear equivalence
\[
\Phi\colon\Mod{A}\stackrel{\sim}{\longrightarrow}\Mod{B},
\]
induces a Morita equivalence between the algebras $A\ltimes M$ and $B\ltimes N$, where $N$ is the essentially unique $B$-bimodule such that there is natural isomorphism of $k$-linear functors
\[
    (-\otimes_B N)\cong\Phi(\Phi^{-1}(-)\otimes_A M).
\]

Similarly, working now over a field, the category of modules of the preprojective algebra $\Pi(A)$ of a hereditary finite-dimensional algebra $A$ (see~\cite{GP79,BGL87} for the definition) can be reconstructed from the category $\Mod{A}$ and the endofunctor
\[
-\otimes_A\Ext[A]{DA}{A}[1]\colon\Mod{A}\longrightarrow\Mod{A},\qquad DA\coloneqq\Hom[\kk]{A}{\kk},
\]
whose restriction to the category of finitely presented $A$-modules is the inverse Auslander--Reiten translation~\cite{Rin98}
\[
    \tau^{-}\colon\mmod{A}\longrightarrow\mmod{A}.
\]
As a consequence, Morita equivalent hereditary finite-dimensional algebras have Morita equivalent preprojective algebras.

Working again over an arbitrary commutative ring, suppose now that $A$ and $B$ are Morita equivalent algebras that are projective as $\kk$-modules. Then, the Hochschild cohomologies of $A$ and $B$ are isomorphic as graded algebras. Indeed, the Hochschild cohomology of $A$ can be identified with the graded algebra of self-extensions of the diagonal $A$-bimodule~\cite{CE99}
\[
    \HH{A}\coloneqq\Ext[A^e]{A}{A},\qquad A^e\coloneqq A\otimes_k A^\op,
\]
and the diagonal $A$-bimodule corresponds to the identity functor of $\Mod{A}$ under the Eilenberg--Watts equivalence. Although much more subtle, the shifted graded Lie algebra structure on Hochschild cohomology, given by the Gerstenhaber bracket~\cite{Ger63}, is also invariant under Morita equivalence, as can be deduced from Schwede's description of the Gerstenhaber bracket in terms of `loops of bimodule extensions' given in \cite{Sch98a} (see also~\cite{Kel04} and~\cite[Theorem~5.4.17]{Her16a}).

\subsection{Tilting theory: towards derived Morita Theory}

In this subsection we let $\kk$ be a field. Tilting theory has its origin in two results that we recall below; for more on the history of tilting theory and related developments we refer the reader to~\cite{AHHK07}.

In representation theory, the work of Bernstein, Gelfand and Ponomarev on reflection functors for quivers~\cite{BGP73} and its extension by Dlab and Ringel~\cite{DR76} to species in the sense of Gabriel~\cite{Gab73} led to the introduction of tilting modules by Brenner and Butler in~\cite{BB80}. Given a finite-dimensional algebra $B$, a \emph{tilting $B$-module} is a finite-dimensional $B$-module $T$ with the following properties:
\begin{itemize}
\item The projective dimension of $T$ is at most $1$ and
\[
    \Ext[B]{T}{T}[1]=0.
\]
In particular, $\Ext[B]{T}{-}[n]=0$ for all $n>1$.
\item  There is an exact sequence of $B$-modules
\[
    0\to B\to T^0\to T^1\to 0
\]
where $T^0$ and $T^1$ are direct summands of finite direct sums of copies of $T$.
\end{itemize}
The representation theory of the endomorphism algebra of $T$ exhibits the following relation to that of the original algebra $B$, which at the time was rather surprising:

\begin{theorem}[Brenner--Butler~\cite{BB80}]
\label{thm:BB}
    Let $B$ be a finite-dimensional algebra, $T$ a tilting $B$-module and set $A\coloneqq\End[B]{T}$. The following statements hold.
    \begin{enumerate}
    \item The following pair $(\T_B,\F_B)$ of subcategories of $\mmod{B}$ is a torsion pair in the sense of Dickson~\cite{Dic66}:
    \begin{align*}
        \T_B&\coloneqq\set{M\in\mmod{B}}[\Ext[B]{T}{M}[1]=0],\\
        \F_B&\coloneqq\set{M\in\mmod{B}}[\Hom[B]{T}{M}=0].
    \end{align*}
    Similarly, the following pair $(\T_A,\F_A)$ of subcategories of $\mmod{A}$ is a torsion pair:
    \begin{align*}
        \T_A&\coloneqq\set{N\in\mmod{A}}[N\otimes_A T=0],\\
        \F_A&\coloneqq\set{N\in\mmod{A}}[\operatorname{Tor}_1^{A}(N,T)=0].
    \end{align*}
    \item There are equivalences of categories
    \[
        \Hom[B]{T}{-}\colon \T_B\stackrel{\sim}{\longrightarrow}\F_A
    \]
    and
    \[
    \Ext[B]{T}{-}[1]\colon\F_B\stackrel{\sim}{\longrightarrow}\T_A.
    \]
    \end{enumerate}
\end{theorem}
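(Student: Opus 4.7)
The plan is to derive both statements from a careful analysis of the adjunction $(-\otimes_A T)\dashv\Hom[B]{T}{-}$ together with its derived counterparts, controlled by the two tilting hypotheses: $\operatorname{pd}_B T\le 1$ with $\Ext[B]{T}{T}[1]=0$, and the short exact sequence $0\to B\to T^0\to T^1\to 0$ with $T^i\in\add{T}$.

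First I would record the basic vanishings. Since $\operatorname{pd}_B T\le 1$, the functor $\Ext[B]{T}{-}[n]$ vanishes for $n\ge 2$, and combining with $\Ext[B]{T}{T}[1]=0$ gives $\Ext[B]{T}{T'}[1]=0$ for every $T'\in\add{T}$. Dually, applying $\Hom[B]{T}{-}$ to the tilting sequence yields a length-one projective resolution in $\Mod{A}$ of an auxiliary $A$-module, which together with $\operatorname{Tor}_n^A(A,T)=0$ establishes $\operatorname{Tor}_n^A(-,T)=0$ for $n\ge 2$. This symmetry between $A$ and $B$ is the engine of the proof.

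For the torsion pair $(\T_B,\F_B)$, I would show that $\T_B$ coincides with the class of $B$-modules generated by $T$. If $T^n\twoheadrightarrow M$ has kernel $K$, the long exact sequence of $\Ext[B]{T}{-}$ together with $\Ext[B]{T}{T^n}[1]=0$ and $\Ext[B]{T}{K}[2]=0$ forces $\Ext[B]{T}{M}[1]=0$. Conversely, if $\Ext[B]{T}{M}[1]=0$, the trace sequence $0\to\operatorname{tr}_T(M)\to M\to M/\operatorname{tr}_T(M)\to 0$ propagates this vanishing to $M/\operatorname{tr}_T(M)$, and applying $\Hom[B]{-}{M/\operatorname{tr}_T(M)}$ to the tilting sequence (using also $\Hom[B]{T}{M/\operatorname{tr}_T(M)}=0$, which holds by definition of the trace) forces $M/\operatorname{tr}_T(M)=0$. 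The trace sequence is then the canonical torsion-pair decomposition, and $\Hom[B]{\T_B}{\F_B}=0$ is immediate from the generation description.

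For the equivalences, I would analyse the unit $\eta_N\colon N\to\Hom[B]{T}{N\otimes_A T}$ and counit $\varepsilon_M\colon\Hom[B]{T}{M}\otimes_A T\to M$. Resolving $M\in\T_B$ by the generation description on the $B$-side and $N$ by its projective resolution on the $A$-side, dimension shifting with the basic vanishings forces both maps to be isomorphisms on the appropriate subcategories, so that $\Hom[B]{T}{-}$ and $-\otimes_A T$ are quasi-inverse equivalences between $\T_B$ and $\F_A$. The second equivalence $\Ext[B]{T}{-}[1]\colon\F_B\xrightarrow{\sim}\T_A$ with quasi-inverse $\operatorname{Tor}_1^A(-,T)$ follows from the same analysis applied to the derived functors, and the torsion pair $(\T_A,\F_A)$ is then deduced from $(\T_B,\F_B)$ by transport of structure. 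The main obstacle will be the bookkeeping of the two four-term exact sequences obtained by applying $-\otimes_A T$ to a projective resolution of $N$ and $\Hom[B]{T}{-}$ to the tilting resolution of $M$, and verifying that their intermediate terms land in the correct torsion classes. A cleaner modern viewpoint, which I expect the next section of the paper to foreshadow, is that $T$ is a tilting object in Rickard's sense: the total derived functor $-\Lotimes[A] T\colon\DerCat{A}\to\DerCat{B}$ is a triangle equivalence whose cohomological shadow on $\mmod{B}$ recovers the two Brenner--Butler equivalences.
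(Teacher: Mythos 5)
The paper states \Cref{thm:BB} without proof, attributing it to \cite{BB80}; its only commentary is the reinterpretation via Happel's derived equivalence $\RHom[B]{T}{-}\colon\DerCat[b]{\mmod{B}}\xrightarrow{\sim}\DerCat[b]{\mmod{A}}$ in the paragraph that follows the statement, which your closing paragraph anticipates exactly (up to replacing that functor by a quasi-inverse). Your elementary route through the adjunction, the trace subfunctor, and dimension shifting is the standard Brenner--Butler argument and is viable in outline; the paper simply does not engage at this level.

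Two points in your sketch are imprecise, one minor and one not. To deduce $\operatorname{Tor}_n^A(-,T)=0$ for $n\ge 2$ you need $\operatorname{pd}({}_A T)\le 1$, and this comes from applying the \emph{contravariant} functor $\Hom[B]{-}{T}$, not $\Hom[B]{T}{-}$, to the tilting sequence: one gets $0\to\Hom[B]{T^1}{T}\to\Hom[B]{T^0}{T}\to T\to 0$ with projective left $A$-modules in degrees $0,1$ (right exactness uses $\Ext[B]{T^1}{T}[1]=0$); the covariant functor produces a four-term exact sequence involving $\Hom[B]{T}{B}$ and $\Ext[B]{T}{B}[1]$, which does not resolve ${}_A T$. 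More substantially, ``$(\T_A,\F_A)$ is a torsion pair by transport of structure'' is not immediate: the equivalences swap torsion and torsion-free sides ($\T_B\simeq\F_A$, $\F_B\simeq\T_A$) and are defined only on the pieces of the torsion pairs, so they do not by themselves deliver a canonical short exact sequence $0\to tN\to N\to N/tN\to 0$ for a general $N\in\mmod{A}$. What is actually needed is the Brenner--Butler symmetry --- that ${}_AT$ is itself a tilting module, i.e.\ projective dimension $\le 1$ (the corrected resolution above), vanishing self-extensions, and a coresolution of $A$ in $\add({}_AT)$ --- after which your $B$-side torsion-pair argument reruns verbatim on the $A$-side. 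That symmetry is the real content hiding behind ``transport of structure'' and needs to be proved rather than invoked.
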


On the other hand, in algebraic geometry, a famous theorem of Beilinson~\cite{Bei78} establishes an equivalence of derived categories
\[
    \DerCat[b]{\coh{\PP[n]}}\stackrel{\sim}{\longrightarrow}\DerCat[b]{\mmod{A}},
\]
between the bounded derived category of coherent sheaves on $n$-dimensional projective space and the bounded derived category of finite-dimensional modules over the finite-dimensional algebra $A\coloneqq\End[\PP[n]]{T}$, where
\[
\textstyle T\coloneqq\bigoplus_{i=0}^{n}\mathcal{O}(i)
\]
and $\mathcal{O}=\mathcal{O}_{\PP[n]}$ is the structure sheaf.
For example, if $n=1$ then $A$ is isomorphic to the path algebra of the Kronecker quiver $0\rightrightarrows 1$.  The coherent sheaf $T$ is the protoypical example of what would later be called a tilting object.

The parallel between the Brenner--Butler Theorem and Beilinson's Theorem is the following: Under the assumptions of \Cref{thm:BB}, Happel observed that there is an equivalence of bounded derived categories~\cite{Hap87}
\[
   \RHom[B]{T}{-}\colon\DerCat[b]{\mmod{B}}\stackrel{\sim}{\longrightarrow}\DerCat[b]{\mmod{A}}
\]  
that identifies $T$ with $A$.\footnote{Happel proved this when $A$ has finite global dimension although the result holds in general.} This equivalence is responsible for the equivalences in \Cref{thm:BB} for we have
\[
    H^\bullet(\RHom[B]{T}{M})\cong\Ext[B]{T}{M}[\bullet],\qquad M\in\mmod{B},
\]
see also~\cite[Sections~4.2 and~7.3]{Kel07}.

\begin{remark}
  Tilting modules of finite projective dimension were introduced by Miyashita in~\cite{Miy86}, who also established appropriate variants of \Cref{thm:BB} for these modules. Such generalised tilting modules were considered independently by Cline, Parshall and Scott in~\cite{CPS86}, who observed that these also induce equivalences of derived categories, thus extending the work of Happel. Another important novelty in these articles is that the authors work with arbitrary rings and not just with finite-dimensional algebras.
\end{remark}

\subsection{Derived Morita theory}

Recall that the \emph{derived category} of $A$ is, by definition, the localisation 
\[
\DerCat{\Mod{A}}\coloneqq\Ch{\Mod{A}}\![\mathrm{qis}^{-1}]
\]
of the category $\Ch{\Mod{A}}$ of unbounded complexes\footnote{Cohomological indexing convention is implicit, but it does not play an essential role in what follows.} of right $A$-modules at the class of quasi-isomorphisms. Although not immediately apparent from the definition, the category $\Mod{A}$ of $A$-modules embeds into the derived category as the full subcategory spanned by the complexes with cohomology concentrated in degree $0$ and therefore $\DerCat{\Mod{A}}$ can be regarded as an enlargement of the category of $A$-modules. Moreover, there are isomorphisms of $\kk$-modules
\[
    \Hom[\DerCat{\Mod{A}}]{M}{N[n]}\cong\Ext[A]{M}{N}[n],\qquad M,N\in\Mod{A},\ n\in\ZZ
\]
so that (equivalence classes of) extensions between $A$-modules can be interpreted as morphisms in the derived category.
However, unlike $\Mod{A}$, the derived category of $A$ is abelian if and only if the algebra $A$ is semi-simple. Nonetheless, the category $\DerCat{\Mod{A}}$ is always equipped with a distinguished class of sequences
\[
    X\to Y\to Z\to X[1],
\]
called \emph{exact triangles}, that satisfy certain axioms so that the derived category is a triangulated category in the sense of Verdier~\cite{Ver96}. Here, as is customary, we denote by $X[1]$ the complex obtained from the complex
\[
X\colon\quad\cdots\to X^{-1}\to X^0\to X^1\to\cdots
\]
by shifting it to the left and changing the sign of the differential. We also remind the reader that the exact triangles in a triangulated category are not characterised by universal properties---a source of substantial technical difficulties in the theory as this often prevents the use of certain categorical arguments in the study of derived categories, see for example~\Cref{rmk:technical_difficulties_I}.

Motivated by the analogous question for module categories as well as previous results in tilting theory, the following fundamental question, answered by Rickard's work, emerges: Given two algebras $A$ and $B$, when are the derived categories $\DerCat{\Mod{A}}$ and $\DerCat{\Mod{B}}$ equivalent as $\kk$-linear triangulated categories?

We need some preparation in order to state Rickard's main results.

\begin{notation}
Given an additive category $\C$, we write $\K{\C}$ for the \emph{homotopy category} of $\C$, that is the ideal quotient of the category of chain complexes in $\C$ by its ideal of null-homotopic morphisms. We also write $\K[+]{\C}$, $\K[-]{\C}$ and $\K[b]{\C}$ for the homotopy categories of complexes in $\C$ that are bounded on the left, bounded on the right, and bounded on both sides, respectively. If $\C$ is moreover abelian, we write $\DerCat{\C}$, $\DerCat[+]{\C}$, $\DerCat[-]{\C}$ and $\DerCat[b]{\C}$ for the derived categories of complexes in $\C$ that are unbounded, bounded on the left, bounded on the right, and bounded on both sides, respectively.
\end{notation}

We first need to introduce analogues of the finitely generated projective modules.

\begin{proposition}
    Let $A$ be an algebra. The canonical (exact) functor
    \[
    \K[b]{\proj{A}}\longrightarrow\DerCat{\Mod{A}}
    \]
    is fully faithful. By definition, its essential image consists of the \emph{perfect complexes} of $A$-modules.
\end{proposition}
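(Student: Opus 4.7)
The plan is to prove the stronger assertion that every bounded complex $P$ of finitely generated projective $A$-modules is \emph{K-projective}, that is, $\Hom[\K{\Mod A}]{P}{Y} = 0$ for every acyclic complex $Y$ in $\Ch{\Mod A}$. Full faithfulness of the canonical functor to $\DerCat{\Mod A}$ follows formally from this: for any quasi-isomorphism $s\colon Y \to Z$ the mapping cone $C(s)$ is acyclic, so the long exact sequence obtained by applying $\Hom[\K{\Mod A}]{P}{-}$ to the associated distinguished triangle shows that $s_{*}\colon\Hom[\K{\Mod A}]{P}{Y}\to\Hom[\K{\Mod A}]{P}{Z}$ is a bijection. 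Running this through the calculus of fractions used to construct $\DerCat{\Mod A}$ yields the desired identification $\Hom[\K{\Mod A}]{P}{X}\cong\Hom[\DerCat{\Mod A}]{P}{X}$ for every $X\in\Ch{\Mod A}$.

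To establish K-projectivity I would induct on the number of non-zero components of $P$. The base case is a single finitely generated projective module $P^n$ concentrated in some degree $n$: any chain map $f\colon P^n[-n] \to Y$ lands in $Z^n(Y) = B^n(Y)$ by acyclicity of $Y$, and then lifts along the surjection $Y^{n-1} \twoheadrightarrow B^n(Y)$ by projectivity of $P^n$, yielding the required null-homotopy. For the inductive step I would use the distinguished triangle in $\K{\Mod A}$ coming from the stupid truncation at the lowest non-zero degree $a$,
\[
\sigma^{\geq a+1}P \to P \to P^a[-a] \to (\sigma^{\geq a+1}P)[1],
\]
together with the induced long exact sequence for $\Hom[\K{\Mod A}]{-}{Y}$ and the inductive hypothesis applied to the endpoints.

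No step is truly difficult; the key technical input is the lifting property of projective modules along surjections onto cycles, with the remainder being routine manipulation of the calculus of fractions. Exactness of the functor is automatic since both triangulated structures are induced from mapping cones in $\Ch{\Mod A}$.
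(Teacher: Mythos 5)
Your argument is correct and complete: the induction on brutal truncations shows that any bounded complex of projectives is K-projective, and the cone/calculus-of-fractions step then identifies $\Hom[\K{\Mod{A}}]{P}{X}$ with $\Hom[\DerCat{\Mod{A}}]{P}{X}$, which gives full faithfulness of $\K[b]{\proj{A}}\to\DerCat{\Mod{A}}$. The paper states this proposition without proof (it is treated as standard background), and what you wrote is exactly the standard argument it implicitly relies on; note only that finite generation of the components plays no role here --- it matters later, for identifying perfect complexes with the compact objects, not for full faithfulness.
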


\begin{remark}
\label{rmk:compacts-DA}
  A complex of $A$-modules $P$ is \emph{compact} if the
  $k$-linear functor
  \[
    \Hom[\DerCat{\Mod{A}}]{P}{-}\colon\DerCat{\Mod{A}}\longrightarrow\Mod{\kk}
  \]
  preserves small coproducts. It is easy to see that perfect complexes are
  compact, and the converse is also true (see~\Cref{thm:Neeman_compacts} below
  and compare also with the proof of~\cite[Proposition~6.3]{Ric89a}). Since being compact is a categorical property of an object, it follows that every (exact) $k$-linear equivalence of derived categories
  \[
    \DerCat{\Mod{A}}\stackrel{\sim}{\longrightarrow}\DerCat{\Mod{B}}
  \]
  restricts to an (exact) equivalence between $\K[b]{\proj{A}}$ and $\K[b]{\proj{B}}$.
\end{remark}

\begin{remark}
  More generally, given a regular cardinal $\alpha$, write $\DerCat{\Mod{A}}^\alpha$ for the full triangulated subcategory of $\DerCat{\Mod{A}}$ of \emph{$\alpha$-compact objects}, see ~\cite[Sec.~4.2]{Nee01} or~\cite{Kra01a} for the definition. For example, $\DerCat{\Mod{A}}^{\aleph_0}=\K[b]{\proj{A}}$. Every (exact) $k$-linear equivalence of derived categories
  \[
    \DerCat{\Mod{A}}\stackrel{\sim}{\longrightarrow}\DerCat{\Mod{B}}
  \]
  restricts to an (exact) equivalence between $\DerCat{\Mod{A}}^\alpha$ and $\DerCat{\Mod{B}}^\alpha$. Recall also that an $A$-module $M$ is \emph{$\alpha$-compact} if the functor
  \[
    \Hom[A]{M}{-}\colon\Mod{A}\longrightarrow\Mod{\kk}
  \]
  preserves $\alpha$-filtered colimits. Equivalently, $M$ is $\alpha$-compact if it admits a presentation by free $A$-modules of rank strictly less than $\alpha$. Write $\mmod[\alpha]{A}$ for the category of $\alpha$-compact $A$-modules. For example, $\mmod[\aleph_0]{A}=\mmod{A}$. Depending on $A$, there exists a regular cardinal $\alpha_0$ such that, for each regular cardinal $\alpha\geq\alpha_0$, the category $\mmod[\alpha]{A}$ is an abelian subcategory of $\Mod{A}$ and, moreover, there is an equivalence of triangulated categories 
  \[
    \DerCat{\mmod[\alpha]{A}}\stackrel{\sim}{\longrightarrow}\DerCat{\Mod{A}}^\alpha,
  \]
  see~\cite[Corollary~5.2 and Proposition~5.8]{Kra15a}.
\end{remark}

Let $A$ be an algebra. The regular representation of $A$ is clearly an example of a perfect complex (concentrated in degree $0$) and it enjoys the following additional properties as an object of the derived category:
\begin{itemize}
\item There is an isomorphism of algebras 
\[
    A\cong\Hom[\DerCat{\Mod{A}}]{A}{A}.
\]
\item For each non-zero integer $n$, we have
\[
    \Hom[\DerCat{\Mod{A}}]{A}{A[n]}\cong\Ext[A]{A}{A}[n]=0.
\]
\item The regular representation of $A$ generates $\K[b]{\proj{A}}$ as an idempotent-complete triangulated category.
\end{itemize}
Thus, the existence of a $k$-linear equivalence of triangulated categories
\[
    \DerCat{\Mod{A}}\stackrel{\sim}{\longrightarrow}\DerCat{\Mod{B}}
\]
implies the existence of a complex $T\in\DerCat{\Mod{B}}$ with analogous properties to those listed above. The following theorem of Rickard shows that the existence of such an object $T$ is not only necessary but also sufficient.

\begin{theorem}[Rickard's Derived Morita Theorem~{\cite{Ric89a,Ric91}}]
\label{thm:Rickards_Morita_Theorem_I}
 Let $A$ and $B$ be a pair of  $\kk$-algebras whose underlying $\kk$-modules are flat. The following statements are equivalent.
 \begin{enumerate}
     \item\label{thm:Rickards_Morita_Theorem_I:it:eq} There exists a $\kk$-linear equivalence of triangulated categories
     \[
        \DerCat{\Mod{A}}\stackrel{\sim}{\longrightarrow}\DerCat{\Mod{B}}.
     \]
     \item\label{thm:Rickards_Morita_Theorem_I:it:std-eq} There exists a complex of $A$-$B$-bimodules $M$ such that the $\kk$-linear functor
     \[
        -\Lotimes[A]M\colon\DerCat{\Mod{A}}\stackrel{\sim}{\longrightarrow}\DerCat{\Mod{B}}
    \]
    is an equivalence of triangulated categories. We call such a complex $M$ a \emph{two-sided tilting complex}.
    \item\label{thm:Rickards_Morita_Theorem_I:it:tilting_obj} There exists a perfect complex of $B$-modules $T$ with the following properties:
    \begin{itemize}
    \item There is an isomorphism of algebras
    \[
    A\cong\Hom[\DerCat{\Mod{B}}]{T}{T}.    
    \]
    \item  The perfect complex $T$ is a \emph{(one-sided) tilting complex}, that is:
    \begin{itemize}    
    \item For each non-zero integer $n$, we have
    \[
    \Hom[\DerCat{\Mod{B}}]{T}{T[n]}=0.    
    \]
    \item $T$ generates $\K[b]{\proj{B}}$ as an idempotent complete triangulated category. Equivalently, the regular representation of $B$ can be obtained from $T$ and its shifts by forming extensions and taking direct summands.\footnote{Despite the similarity with the notion of a progenerator, verifying that $T$ satisfies this criterion is in general much harder as one needs to deal with extensions that are not necessarily split.}
    \end{itemize}
    \end{itemize}
 \end{enumerate}
\end{theorem}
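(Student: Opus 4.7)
The plan is to prove the chain $\eqref{thm:Rickards_Morita_Theorem_I:it:std-eq}\Rightarrow\eqref{thm:Rickards_Morita_Theorem_I:it:eq}\Rightarrow\eqref{thm:Rickards_Morita_Theorem_I:it:tilting_obj}\Rightarrow\eqref{thm:Rickards_Morita_Theorem_I:it:std-eq}$, the first arrow being immediate.

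For $\eqref{thm:Rickards_Morita_Theorem_I:it:eq}\Rightarrow\eqref{thm:Rickards_Morita_Theorem_I:it:tilting_obj}$, I would fix a $\kk$-linear triangulated equivalence $\Phi\colon\DerCat{\Mod{A}}\stackrel{\sim}{\longrightarrow}\DerCat{\Mod{B}}$ and set $T\coloneqq\Phi(A)$. Each property of the regular representation $A\in\DerCat{\Mod{A}}$ listed just before the theorem is categorical and therefore transports along $\Phi$: compactness (hence, by \Cref{rmk:compacts-DA}, perfectness); the vanishing of $\Hom[\DerCat{\Mod{A}}]{A}{A[n]}$ for $n\neq 0$; the isomorphism of $\kk$-algebras $A\cong\End[\DerCat{\Mod{A}}]{A}$; and the equality $\thick{A}=\K[b]{\proj{A}}$, which under $\Phi$ becomes $\thick{T}=\K[b]{\proj{B}}$.

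The substance of the theorem is $\eqref{thm:Rickards_Morita_Theorem_I:it:tilting_obj}\Rightarrow\eqref{thm:Rickards_Morita_Theorem_I:it:std-eq}$: given a tilting complex $T$, produce a complex $M$ of $A$-$B$-bimodules for which $-\Lotimes[A]M$ is an equivalence. First I would replace $T$ by a bounded complex of finitely generated projective $B$-modules, permissible because $T$ is perfect. The dg $\kk$-algebra $E\coloneqq\REnd[B]{T}$ is then modelled by the strict endomorphism dg algebra of this complex, and the tilting hypothesis says exactly that $E$ has cohomology concentrated in degree $0$, where it equals $A$; moreover $T$ is a strict dg $E$-$B$-bimodule. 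Using $\kk$-flatness of $A$, I would choose a cofibrant dg $\kk$-algebra resolution $\widetilde{A}\stackrel{\sim}{\longrightarrow}A$ and lift the canonical quasi-isomorphism $A\stackrel{\sim}{\longleftarrow}E$ to a strict dg algebra map $\widetilde{A}\to E$. Restriction of scalars makes $T$ into a dg $\widetilde{A}$-$B$-bimodule, which a cofibrant replacement over $\widetilde{A}\otimes_{\kk}B^{\op}$ (here $\kk$-flatness of $B$ is used to keep the derived tensor product well behaved), followed by transport along $\widetilde{A}\stackrel{\sim}{\longrightarrow}A$, refines to an honest complex $M$ of $A$-$B$-bimodules with $M\simeq T$ in $\DerCat{\Mod{B}}$. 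Finally I would verify that $-\Lotimes[A]M$ sends $A$ to $T$, preserves small coproducts, and is fully faithful at $A$ (where the tilting vanishing is precisely what is required), and then invoke compact generation of both sides by $A$ and $T$ respectively to conclude that it is a triangulated equivalence.

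The main obstacle throughout is the rigidification step: the $A$-action on $T$ supplied by $A\cong\End[\DerCat{\Mod{B}}]{T}$ is a priori only defined up to coherent homotopy, and upgrading it to an honest action on an honest complex of bimodules is where the flatness hypothesis enters. The dg-algebraic route above makes this transparent. Rickard's original argument in \cite{Ric89a,Ric91} proceeds differently, by an ingenious induction on the amplitude of $T$ that reduces the general case to the one-term situation recovering classical Morita theory, and it is this inductive rigidification that constitutes the technical core of the original papers.
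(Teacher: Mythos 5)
Your treatment of (2)$\Rightarrow$(1)$\Rightarrow$(3) matches the paper's own remark that these implications are formal (and need no flatness); the interesting comparison concerns (3)$\Rightarrow$(2). Note that the survey does not actually prove \Cref{thm:Rickards_Morita_Theorem_I}: it attributes the proof to \cite{Ric89a,Ric91}, and \Cref{rmk:technical_difficulties_I} only explains why the naive arguments fail (no derived Eilenberg--Watts theorem, and $\End[\DerCat{\Mod{B}}]{T}$ does not act on the components of $T$), while pointing out that the difficulties disappear in the dg setting \cite{Kel93,Kel94}. Your rigidification via $E=\REnd[B]{T}$, a cofibrant dg model of $A$, and base change of a cofibrant dg bimodule is exactly that Keller-style route; it is in essence the argument the paper does write down in \Cref{sec:metrics} for the flatness-free variant (\Cref{thm:Rickards_Morita_Theorem_III}, step (3)$\Rightarrow$(1)), except that there the equivalence is produced by restriction and induction along the zig-zag through the soft truncation of $E$ rather than by exhibiting a complex of bimodules, so no flatness hypothesis is needed. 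What your approach buys is statement (2) itself, a genuine complex of $A$-$B$-bimodules inducing the equivalence, at the cost of flatness; what Rickard's original approach buys is independence from dg machinery: in \cite{Ric89a} the functor is built directly on homotopy categories of projective modules, and in \cite{Ric91} the bimodule complex is obtained by an obstruction-theoretic limit argument over truncations, which is not quite the `induction on amplitude reducing to classical Morita theory' you describe.

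Two small repairs to your write-up. First, there is no canonical quasi-isomorphism from $E$ to $A$: $E$ has components in positive degrees, and you must pass through the truncation, lifting $\widetilde{A}\to A$ against the surjective quasi-isomorphism $\tau^{\le 0}E\twoheadrightarrow H^0(E)\cong A$ and composing with $\tau^{\le 0}E\hookrightarrow E$, exactly as in the paper's proof of \Cref{thm:Rickards_Morita_Theorem_III}. Second, flatness is not what makes the resolution $\widetilde{A}\to A$ exist; its actual role is to guarantee that $\widetilde{A}\otimes_\kk B^\op\to A\otimes_\kk B^\op$ is a quasi-isomorphism, so that the cofibrant dg $\widetilde{A}$-$B$-bimodule replacing $T$ can be pushed forward to an honest complex of $A$-$B$-bimodules without changing its image in $\DerCat{\Mod{B}}$, and so that the induced map $A\to\End[\DerCat{\Mod{B}}]{T}$ is the given isomorphism. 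With these adjustments your concluding d\'evissage (a coproduct-preserving exact functor, fully faithful on the compact generator $A$ by the tilting vanishing, with generating image since $B\in\thick{T}$) is standard and closes the argument.
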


\begin{remark}
     The equivalence \eqref{thm:Rickards_Morita_Theorem_I:it:eq}$\Leftrightarrow$\eqref{thm:Rickards_Morita_Theorem_I:it:tilting_obj} does not require any flatness assumptions. This was shown by Rickard in~\cite{Ric89a} where instead of \eqref{thm:Rickards_Morita_Theorem_I:it:eq} he considered the following \emph{a priori} weaker condition:
     \begin{enumerate}
    \item[(1')]   There exists a $\kk$-linear equivalence of triangulated categories
     \[
        \DerCat[-]{\Mod{A}}\stackrel{\sim}{\longrightarrow}\DerCat[-]{\Mod{B}}.
     \]
     \end{enumerate}
     The extension to unbounded derived categories in the non-flat case was obtained by Keller in~\cite{Kel94}.
\end{remark}

\begin{remark}
\label{rmk:technical_difficulties_I}
In \Cref{thm:Rickards_Morita_Theorem_I}, it is clear that \eqref{thm:Rickards_Morita_Theorem_I:it:std-eq}$\Rightarrow$\eqref{thm:Rickards_Morita_Theorem_I:it:eq}$\Rightarrow$\eqref{thm:Rickards_Morita_Theorem_I:it:tilting_obj} without any flatness assumptions. The implication \eqref{thm:Rickards_Morita_Theorem_I:it:eq}$\Rightarrow$\eqref{thm:Rickards_Morita_Theorem_I:it:std-eq} would follow from a derived version of the Eilenberg--Watts Theorem, that is, a hypothetical triangulated equivalence between the derived category of complexes of $A$-$B$-bimodules and the category of $k$-linear exact functors $\DerCat{\Mod{A}}\to\DerCat{\Mod{B}}$ that preserve small coproducts. But the latter category of functors is not a triangulated category in general. Similarly, the implication \eqref{thm:Rickards_Morita_Theorem_I:it:tilting_obj}$\Rightarrow$\eqref{thm:Rickards_Morita_Theorem_I:it:std-eq} would follow if one was able to promote $T$ to a complex of $A$-$B$-bimodules in order to gain access to the derived adjunction
\[
    \begin{tikzcd}
        -\Lotimes[A]T\colon\DerCat{\Mod{A}}\rar[shift left]&\DerCat{\Mod{B}}\noloc\RHom[A]{T}{-}\lar[shift left],
    \end{tikzcd}
\]
but the endomorphism algebra of the complex $T\in\DerCat{\Mod{B}}$ does not act on its components.
Rickard's proof of \Cref{thm:Rickards_Morita_Theorem_I} is therefore technically involved.
On the other hand, these issues can be circumvented if instead of derived categories one works with their differential graded variants, see~\cite{Kel93,Kel94} and~\Cref{section:new_perspectives}. It is also worth noticing that the work of Spaltenstein~\cite{Spa88} on the existence of resolutions of unbounded complexes was essentially contemporaneous with Rickard's paper~\cite{Ric89a}.
\end{remark}

\begin{remark}
    It is not known whether every $\kk$-linear triangulated equivalence between derived categories of algebras is of \emph{standard type}, that is functorially isomorphic to the derived tensor product with a complex of bimodules, see~\cite{MY01,Che16,Min12,CZ19,BC24} among others for progress on this question. The analogous question in algebraic geometry is whether every exact $\kk$-linear functor between bounded derived categories of coherent sheaves on smooth projective schemes is of Fourier--Mukai type~\cite{BLL04}, see~\cite{Bal09,Ola24,Orl97,Toe07} for positive results and~\cite{RVdBN19,Kuen24} for counterexamples. As we shall see in \Cref{thm:dgEW}, in the differential graded setting this question has a positive answer.
\end{remark}

Two algebras are \emph{derived equivalent} if their derived categories are equivalent as $\kk$-linear triangulated categories. Clearly, Morita equivalence implies derived equivalence, but the converse is false in general: Let $\kk$ be a field and consider the path algebras $A=\kk(3\to 2\to 1)$ and $B=\kk(3\leftarrow 2\to 1)$. These algebras are not Morita equivalent, as can be seen by comparing the maximum Loewy lengths of the corresponding indecomposable projective modules. On the other hand, $B\cong\End[A]{T}$ is the endomorphism algebra of the tilting $A$-module~\cite{APR79}
\[
    T\coloneqq P_1\oplus P_2\oplus\tau^{-}(P_3)
\]
and, consequently, $A$ and $B$ are derived equivalent by Happel's Theorem~\cite{Hap87} and Rickard's \Cref{thm:Rickards_Morita_Theorem_II} below.

It is not difficult to show that, similar to Morita equivalence, the notion of derived equivalence is left-right symmetric~\cite[Proposition~9.1]{Ric89a}. One can then also ask which invariants are preserved under derived equivalence. A basic example is the Grothendieck group 
\[
    K_0(\K[b]{\proj{A}})\cong K_0(\proj{A})
\]
of the category of perfect complexes~\cite[Proposition~9.3]{Ric89a}. On the other hand, the global dimension of an algebra is not invariant under derived equivalence, but only its \emph{finiteness}~\cite{Hap88,GR97,KK20}. Rickard also establishes the following result. 

\begin{theorem}[{\cite{Ric89a,Ric91}}]
\label{thm:invariance-HH}
    Let $A$ and $B$ be a pair of $\kk$-algebras and suppose that there exists a $\kk$-linear equivalence of triangulated categories
    \[
    \DerCat{\Mod{A}}\stackrel{\sim}{\longrightarrow}\DerCat{\Mod{B}}.
    \]
    Then, $A$ and $B$ have isomorphic centres. Moreover, if the underlying $\kk$-modules of $A$ and $B$ are projective, then there is an isomorphism
    \[
        \HH{A}=\Ext[A^e]{A}{A}\stackrel{\sim}{\longrightarrow}\Ext[B^e]{B}{B}=\HH{B}
    \]
    between the corresponding Hochschild cohomology graded algebras.
\end{theorem}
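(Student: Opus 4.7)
The plan is to treat the two assertions separately. For the centre statement, I would use the canonical identification $Z(A)\cong\End(\operatorname{id}_{\DerCat{\Mod{A}}})$: every central element $z\in Z(A)$ yields a natural transformation of the identity by multiplication by $z$, and conversely a natural endomorphism $\eta$ of the identity is determined by its component $\eta_A\in\End[A]{A}=A$, which is forced to be central by naturality with respect to right multiplication by arbitrary elements of $A$. Since a $k$-linear triangulated equivalence induces, by conjugation, a ring isomorphism between the endomorphism rings of the identity functors of the two categories, the isomorphism of centres follows without any flatness hypothesis.

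For the Hochschild cohomology statement, the projectivity hypothesis on $A$ and $B$ lets me invoke the implication \eqref{thm:Rickards_Morita_Theorem_I:it:eq}$\Rightarrow$\eqref{thm:Rickards_Morita_Theorem_I:it:std-eq} of \Cref{thm:Rickards_Morita_Theorem_I} to realise the given equivalence as a standard one $-\Lotimes[A]M$ for some complex of $A$-$B$-bimodules $M$. A quasi-inverse is of the form $-\Lotimes[B]M^\vee$ for a complex $M^\vee$ of $B$-$A$-bimodules, and the unit and counit of the resulting adjunction produce isomorphisms $M\Lotimes[B]M^\vee\cong A$ in $\DerCat{\Mod{A^e}}$ and $M^\vee\Lotimes[A]M\cong B$ in $\DerCat{\Mod{B^e}}$. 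I would then show that conjugation
\[
X\longmapsto M^\vee\Lotimes[A]X\Lotimes[A]M
\]
defines a triangulated equivalence $\DerCat{\Mod{A^e}}\stackrel{\sim}{\longrightarrow}\DerCat{\Mod{B^e}}$ carrying the diagonal bimodule $A$ to the diagonal bimodule $B$. Taking graded endomorphisms yields
\[
\HH{A}=\Ext[A^e]{A}{A}\stackrel{\sim}{\longrightarrow}\Ext[B^e]{B}{B}=\HH{B}
\]
as graded $k$-modules, and the multiplicative structure is preserved because the Yoneda composition on $\Ext[A^e]{A}{A}$ coincides with the Hochschild cup product and is respected by any triangulated equivalence.

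The main obstacle is the rigorous verification that the iterated tensor product above genuinely defines a triangulated equivalence between the derived categories of bimodules. In the pre-dg setting one must work with explicit termwise-flat replacements of $M$ and $M^\vee$ that simultaneously realise both compositions $M\Lotimes[B]M^\vee$ and $M^\vee\Lotimes[A]M$ as honest tensor products of bimodule complexes, and for this the $k$-projectivity hypothesis is essential: it guarantees that such resolutions exist and that the outer $A^e$- and $B^e$-module structures remain compatible with the inner bimodule actions. This is precisely the type of technical difficulty highlighted in \Cref{rmk:technical_difficulties_I}, and it is one of the motivations for the passage to differential graded enhancements, where the analogous argument becomes essentially formal.
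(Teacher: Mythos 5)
The paper does not actually supply a proof of this theorem; it simply cites Rickard's papers, so there is nothing to compare against directly. Assessed on its own terms, your argument for the Hochschild cohomology statement is essentially Rickard's: pass to a two-sided tilting complex $M$ via \Cref{thm:Rickards_Morita_Theorem_I} (note that what is needed there is flatness, which $\kk$-projectivity of course implies), form the conjugation equivalence $\DerCat{\Mod{A^e}}\to\DerCat{\Mod{B^e}}$, $X\mapsto M^\vee\Lotimes[A]X\Lotimes[A]M$, observe that it sends the diagonal bimodule $A$ to the diagonal bimodule $B$, and then take graded Yoneda endomorphism rings. You have also correctly located the genuine technical content of this step, namely constructing termwise suitable replacements so that the double tensor product is well defined on the derived level, which is exactly what Rickard does in \cite{Ric91}.

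The argument for the centres, however, has a real gap, and notably it is at the point where the triangulated setting differs from the abelian one. You assert that a natural endomorphism $\eta$ of $\operatorname{id}_{\DerCat{\Mod{A}}}$ is ``determined by its component $\eta_A$,'' that is, that the ring map $\Z{A}\to\End{\operatorname{id}_{\DerCat{\Mod{A}}}}$ is surjective (equivalently, its retraction $\eta\mapsto\eta_A$ is injective). For $\Mod{A}$ this is easy because every module receives an epimorphism from a free module, so $\eta_M\circ\pi=\pi\circ\eta_{A^{(I)}}$ forces $\eta_M$. In $\DerCat{\Mod{A}}$ there is no such epimorphism, and the obvious inductive attempt---build $X$ from shifts of $A$ by cones and deduce $\eta_X=0$ from $\eta_A=0$---fails precisely because cones are not functorial: from a triangle $X\to Y\to Z$ with $\eta_X=\eta_Y=0$ one only learns that $\eta_Z$ factors through both $Z\to\Sigma X$ and $\Sigma X\to Z$ (so, e.g., $\eta_Z^2=0$), not that it vanishes. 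This is exactly the phenomenon the paper flags in \Cref{rmk:technical_difficulties_I}. Thus the identification $\Z{A}\cong\End{\operatorname{id}_{\DerCat{\Mod{A}}}}$ requires a genuine argument that you do not supply. Rickard's own route to the centre isomorphism sidesteps this issue and also explains why no flatness hypothesis is needed: one lets $z\in\Z{B}$ act by termwise multiplication on the tilting complex $T\in\K[b]{\proj{B}}$, observes that this endomorphism commutes with every $B$-linear chain endomorphism of $T$ and hence lies in the centre of $\End[\DerCat{\Mod{B}}]{T}\cong A$, and so obtains a ring homomorphism $\Z{B}\to\Z{A}$ directly, which is then shown to be invertible. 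You should either supply a proof that $\eta_A=0$ forces $\eta=0$ (for instance by first restricting $\eta$ to $\Mod{A}\hookrightarrow\DerCat{\Mod{A}}$ and then carefully controlling the resulting ghost phenomena), or replace this part of the argument with Rickard's two-sided action on the tilting complex.
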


\begin{remark}
    The shifted graded Lie algebra structure in Hochschild cohomology is also invariant under derived equivalence~\cite{Kel04}, see also~\cite{Kel03} for a more general result.
\end{remark}

\begin{remark}
    It follows from \Cref{thm:invariance-HH} that two commutative algebras are derived-equivalent if and only if they are isomorphic.
\end{remark}

One can also ask whether representation-theoretic properties of an algebra are invariant under derived equivalence. Recall that a finite-dimensional algebra $A$ over a field $\kk$ is \emph{symmetric} if there exists an isomorphism of $A$-bimodules $A\cong DA$ between the diagonal bimodule and its $\kk$-linear dual. The following result of Rickard shows that symmetric algebras are closed under derived equivalence.

\begin{theorem}[{\cite{Ric91}}]
\label{thm:symmetric}
    Let $A$ and $B$ be derived-equivalent finite-dimensional algebras over a field $\kk$. Then, $A$ is symmetric if and only if $B$ is symmetric.
\end{theorem}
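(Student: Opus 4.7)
The plan is to characterise symmetry of $A$ as the isomorphism $A \cong DA$ inside the derived category $\DerCat{\Mod{A^e}}$ of $A$-bimodules, and then to transport this datum along a bimodule-level lift of the given equivalence. By \Cref{thm:Rickards_Morita_Theorem_I} I may assume the derived equivalence is of standard type, i.e.\ of the form $-\Lotimes[A] T$ for a two-sided tilting complex $T$ of $A$-$B$-bimodules. Following Rickard~\cite{Ric91} (with later clarifications by Keller~\cite{Kel94}), such a $T$ induces a $\kk$-linear triangulated equivalence of bimodule derived categories
\[
    \Psi\colon \DerCat{\Mod{A^e}} \stackrel{\sim}{\longrightarrow} \DerCat{\Mod{B^e}},
\]
which sends the diagonal bimodule $A$ to the diagonal bimodule $B$.

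The heart of the argument is to show that $\Psi$ also sends $DA$ to $DB$. The key observation is that $DA$ represents the $\kk$-dual of the Hochschild homology functor: for every $A$-bimodule complex $M$ there is a canonical isomorphism
\[
    \RHom[A^e]{M}{DA} \;\cong\; D\bigl(M \Lotimes[A^e] A\bigr),
\]
obtained from the adjunction $\RHom[A^e]{M}{\RHom[\kk]{A}{\kk}} \cong \RHom[\kk]{M \Lotimes[A^e] A}{\kk}$. Moreover $\Psi$ preserves Hochschild homology with coefficients, in the sense that $M \Lotimes[A^e] A \cong \Psi(M) \Lotimes[B^e] B$ naturally in $M$; this is the homology analogue of the cohomology invariance recorded in \Cref{thm:invariance-HH}, and follows from the cyclic trace property of Hochschild homology applied to the two-sided tilting complex $T$. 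Combining these two inputs with the $\kk$-linearity of $\Psi$ yields, for every bimodule complex $M$,
\[
    \RHom[B^e]{\Psi(M)}{\Psi(DA)} \cong \RHom[A^e]{M}{DA} \cong \RHom[B^e]{\Psi(M)}{DB},
\]
whence $\Psi(DA) \cong DB$ by the Yoneda lemma, since $\Psi(M)$ ranges over all of $\DerCat{\Mod{B^e}}$.

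Because the embedding $\mmod{A^e} \hookrightarrow \DerCat{\Mod{A^e}}$ is fully faithful, $A$ is symmetric iff $A \cong DA$ in $\DerCat{\Mod{A^e}}$, and similarly for $B$. Applying $\Psi$ one obtains the chain: $A$ symmetric $\iff A \cong DA \iff \Psi(A) \cong \Psi(DA) \iff B \cong DB \iff B$ symmetric. The principal obstacle in this outline is the construction of the bimodule lift $\Psi$ and the verification that it respects Hochschild homology functorially in the coefficients $M$; neither is contained in \Cref{thm:Rickards_Morita_Theorem_I}, and both rely on the more delicate analysis of derived functors in \cite{Ric91,Kel94}. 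Once these inputs are granted, the representability characterisation of $DA$ makes the transport $DA \leftrightarrow DB$ immediate.
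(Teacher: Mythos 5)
Your proof is correct, and it reaches the same core identity as Rickard's argument---namely that the bimodule lift $\Psi$ of the standard equivalence sends $DA$ to $DB$, or equivalently that $DA\Lotimes[A]T\cong T\Lotimes[B]DB$ for the two-sided tilting complex $T$---but it arrives there by a genuinely different route. The paper (see the remark following \Cref{thm:symmetric}) follows \cite[Proposition~5.2]{Ric91}, which establishes this identity by a direct analysis of how the derived Nakayama functor $-\Lotimes[A]DA$ on the one-sided derived category intertwines with standard derived equivalences; the argument there is essentially a Serre-duality computation with the complex $T$ and its inverse. You instead stay entirely at the bimodule level and deduce $\Psi(DA)\cong DB$ from representability: $DA$ is the object corepresenting the $\kk$-dual of Hochschild homology, and the trace (cyclic rotation) property shows that $\Psi$ carries $M\Lotimes[A^e]A$ to $\Psi(M)\Lotimes[B^e]B$ naturally, so the representing objects must match. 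Your route is more uniform and categorical, and it exposes the link to the invariance of Hochschild homology under derived equivalence (the homological sibling of \Cref{thm:invariance-HH}); Rickard's Nakayama-functor route is more hands-on but has the advantage of producing the stronger statement that any standard equivalence intertwines the Nakayama functors, which is used elsewhere. Both depend on the same nontrivial input---the existence of the two-sided tilting complex and the induced bimodule-category equivalence---which you rightly flag as the place where the real work from \cite{Ric91,Kel94} enters.

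Two minor points worth recording. First, your appeal to \Cref{thm:Rickards_Morita_Theorem_I} to upgrade the given equivalence to one of standard type uses the flatness hypothesis there, which is automatic here since $\kk$ is a field. Second, for the Yoneda step it suffices to work with $\Hom$-groups rather than full $\RHom$ complexes: the natural isomorphism
\[
\Hom[\DerCat{\Mod{B^e}}]{N}{\Psi(DA)}\;\cong\;\Hom[\DerCat{\Mod{B^e}}]{N}{DB}
\]
for all $N$ (obtained because $\Psi$ is essentially surjective) already forces $\Psi(DA)\cong DB$, so no enhancement is needed at that point.
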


\begin{remark}
    The proof of \Cref{thm:symmetric} relies on an analysis of the compatibility between the derived Nakayama functor
    \[
    -\Lotimes[A] DA\colon\DerCat{\Mod{A}}\longrightarrow\DerCat{\Mod{A}}
    \]
    and derived equivalences of standard type~\cite[Proposition~5.2]{Ric91}.
\end{remark}

\begin{remark}
    Another class of algebras that is closed under derived equivalence is that of gentle algebras~\cite{AH81,AS87}; this was shown by Schr{\"o}er and Zimmermann in~\cite{SZ03}. Gentle algebras are derived equivalent to partially wrapped Fukaya categories of surfaces~\cite{HKK17,LP19}, and the classification of gentle algebras up to derived equivalence was achieved recently in~\cite{APS23,Opp19} by verifying a conjecture from~\cite{LP19}.
\end{remark}

Continuing with the same theme, one may ask which constructions preserve derived equivalences. Rickard proves the following result in this direction.

\begin{theorem}[{\cite{Ric89b}}]
\label{thm:TA}
Let $A$ and $B$ be derived-equivalent finite-dimensional algebras over a field $\kk$.  Then, their trivial extensions $A\ltimes DA$ and $B\ltimes DB$ are derived equivalent.
\end{theorem}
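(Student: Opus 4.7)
The strategy is to construct a tilting complex for $B\ltimes DB$ whose derived endomorphism algebra is isomorphic to $A\ltimes DA$, and then to invoke \Cref{thm:Rickards_Morita_Theorem_I} to deduce the desired derived equivalence. By hypothesis, \Cref{thm:Rickards_Morita_Theorem_I} furnishes a tilting complex $T\in\K[b]{\proj{B}}$ with $\End[\DerCat{\Mod{B}}]{T}\cong A$. Since each term of $T$ is a finitely generated projective right $B$-module, the complex
\[
    \tilde{T}\coloneqq T\otimes_B(B\ltimes DB)\cong T\Lotimes[B](B\ltimes DB)
\]
lies in $\K[b]{\proj{B\ltimes DB}}$, as induction sends finitely generated projective $B$-modules to finitely generated projective $(B\ltimes DB)$-modules.

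The next step is to compute the derived endomorphism algebra of $\tilde{T}$. By the induction-restriction adjunction together with the $B$-bimodule splitting $B\ltimes DB\cong B\oplus DB$,
\[
    \REnd[B\ltimes DB]{\tilde{T}}\cong\RHom[B]{T}{T\Lotimes[B](B\ltimes DB)}\cong\REnd[B]{T}\oplus\RHom[B]{T}{T\Lotimes[B]DB}.
\]
The first summand is quasi-isomorphic to $A$ concentrated in degree zero. For the second, I would apply derived Serre duality for the finite-dimensional algebra $B$: the Nakayama functor $-\Lotimes[B]DB$ yields, for perfect $X$ and bounded $Y$, a natural isomorphism $\RHom[B]{Y}{X\Lotimes[B]DB}\cong D\RHom[B]{X}{Y}$; specialising to $X=Y=T$ identifies the second summand with $DA$ in degree zero. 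Hence $\tilde{T}$ has no self-extensions and its endomorphism algebra in $\DerCat{\Mod{B\ltimes DB}}$ is isomorphic to $A\oplus DA$ as a $\kk$-module.

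The main obstacle is to identify the resulting algebra structure on $A\oplus DA$ with that of the trivial extension $A\ltimes DA$. The induced $A$-bimodule structure on the $DA$-summand matches the canonical one on $DA=\Hom[\kk]{A}{\kk}$ by the naturality of derived Serre duality, and the essential square-zero relation $(DA)^2=0$ is verified by tracking the composition of two such endomorphisms through the adjunction: it factors through a morphism whose target involves $DB\otimes_B DB$, and the multiplication map $DB\otimes_B DB\to B\ltimes DB$ vanishes by definition of the trivial extension.

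It remains to show that $\tilde{T}$ generates $\K[b]{\proj{B\ltimes DB}}$ as an idempotent-complete triangulated category. Since $DB$ lies in the Jacobson radical of $B\ltimes DB$, idempotents lift along the canonical surjection $B\ltimes DB\twoheadrightarrow B$, and every indecomposable projective $(B\ltimes DB)$-module has the form $P\otimes_B(B\ltimes DB)$ for some indecomposable projective $B$-module $P$. Since $T$ generates $\K[b]{\proj{B}}$ as an idempotent-complete triangulated category and the induction functor $-\otimes_B(B\ltimes DB)$ is triangulated, $\tilde{T}$ generates $\K[b]{\proj{B\ltimes DB}}$. Applying the implication \eqref{thm:Rickards_Morita_Theorem_I:it:tilting_obj}$\Rightarrow$\eqref{thm:Rickards_Morita_Theorem_I:it:eq} of \Cref{thm:Rickards_Morita_Theorem_I} to the tilting complex $\tilde{T}$ and its endomorphism algebra $A\ltimes DA$ then yields the desired triangulated equivalence $\DerCat{\Mod{A\ltimes DA}}\stackrel{\sim}{\longrightarrow}\DerCat{\Mod{B\ltimes DB}}$.
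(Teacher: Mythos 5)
Your argument is correct in outline, and it is essentially Rickard's original one from the cited paper \cite{Ric89b}: the survey itself states \Cref{thm:TA} without proof, so the only comparison available is with that source and with the remark following the theorem, which points to the later, two-sided route of \cite[Corollary~5.4]{Ric91} (standard equivalences given by two-sided tilting complexes), the version that generalises to arbitrary split square-zero extensions. Your one-sided computation is sound: induction along $B\to B\ltimes DB$ sends $T$ to a bounded complex $\tilde{T}$ of finitely generated projective $(B\ltimes DB)$-modules, the induction--restriction adjunction together with the $B$-bimodule splitting $B\ltimes DB\cong B\oplus DB$ gives the additive decomposition, and the Nakayama-functor duality $\Hom[\DerCat{\Mod{B}}]{Y}{X\Lotimes[B]DB}\cong D\Hom[\DerCat{\Mod{B}}]{X}{Y}$ for perfect $X$ yields both the vanishing of $\Hom[\DerCat{\Mod{B\ltimes DB}}]{\tilde{T}}{\tilde{T}[n]}$ for $n\neq 0$ and the identification of the second summand with $DA$. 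The generation step can be streamlined: you do not need idempotent lifting or indecomposable projectives, since the triangulated functor $-\otimes_B(B\ltimes DB)$ sends $B$ to $B\ltimes DB$ and $T$ to $\tilde{T}$, so $B\in\thick{T}$ immediately gives $B\ltimes DB\in\thick{\tilde{T}}$. The crux, which you treat only in outline, is the multiplicative identification $\End[\DerCat{\Mod{B\ltimes DB}}]{\tilde{T}}\cong A\ltimes DA$, and it is worth recording why your sketch closes: because $\tilde{T}$ is a bounded complex of induced projectives, the adjunction is an isomorphism at the level of Hom-complexes, so every derived endomorphism of $\tilde{T}$ is represented by the $(B\ltimes DB)$-linear extension $\tilde{\alpha}+\tilde{\beta}$ of a chain map $T\to T\oplus(T\otimes_B DB)$; then $\tilde{\beta}_1\circ\tilde{\beta}_2=0$ on the nose, since $\tilde{\beta}_1$ maps $T\otimes_B DB=(T\otimes 1)\cdot DB$ into $(T\otimes_B DB)\cdot DB=0$, and the $A$-bimodule structure on $\Hom[\DerCat{\Mod{B}}]{T}{T\Lotimes[B]DB}$ (postcomposition with $\alpha\otimes DB$, precomposition with $\alpha$) corresponds to the canonical one on $DA$ by binaturality of the duality isomorphism, so the extension is split with square-zero kernel and no twist. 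With that made explicit, applying \eqref{thm:Rickards_Morita_Theorem_I:it:tilting_obj}$\Rightarrow$\eqref{thm:Rickards_Morita_Theorem_I:it:eq} of \Cref{thm:Rickards_Morita_Theorem_I} (flatness being automatic over a field) finishes the proof exactly as you say.
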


\begin{remark}
    Rickard also proves a substantial generalisation of \Cref{thm:TA} for split square-zero extensions, see \cite[Corollary~5.4]{Ric91} for details. Notice that, due to the lack of functoriality of cones in a triangulated category, the derived category of a split square-zero extension cannot be reconstructed from the derived category of the algebra and the extending bimodule, at least if one wishes to avoid the use of higher structures.
\end{remark}

In the context of derived categories, Rickard proves the following variant of~\Cref{thm:Morita_subcategories}. Recall that an algebra $A$ is \emph{right coherent} if the category $\mmod{A}$ is closed under kernels in $\Mod{A}$, 

\begin{theorem}[{\cite{Ric89a,Ric91}}]
    \label{thm:Rickards_Morita_Theorem_II}
    Let $A$ and $B$ be two $\kk$-algebras whose underlying $\kk$-modules are flat. The following statements are equivalent:
    \begin{enumerate}
    \item There exists a $\kk$-linear equivalence of triangulated categories 
    \[
        \DerCat{\Mod{A}}\stackrel{\sim}{\longrightarrow}\DerCat{\Mod{B}}.
    \]
    \item There exists a $\kk$-linear equivalence of triangulated categories 
    \[
        \DerCat[-]{\Mod{A}}\stackrel{\sim}{\longrightarrow}\DerCat[-]{\Mod{B}}.
    \]
    \item There exists a $\kk$-linear equivalence of triangulated categories 
    \[
        \DerCat[b]{\Mod{A}}\stackrel{\sim}{\longrightarrow}\DerCat[b]{\Mod{B}}.
    \]
    \item\label{thm:Rickards_Morita_Theorem_II:it:modA}There exists a $\kk$-linear equivalence of triangulated categories 
    \[
        \K[b]{\proj{A}}\stackrel{\sim}{\longrightarrow}\K[b]{\proj{B}}.
    \]
\end{enumerate}
If $A$ and $B$ are right coherent, then the above are equivalent to the following additional statement:
\begin{enumerate}[resume]
    \item\label{thm:Rickards_Morita_Theorem_II:it:pcohA} There exists a $\kk$-linear equivalence of triangulated categories 
    \[
        \DerCat[b]{\mmod{A}}\stackrel{\sim}{\longrightarrow}\DerCat[b]{\mmod{B}}.
    \]
\end{enumerate}
\end{theorem}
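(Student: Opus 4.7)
My plan is to pivot around condition (4) and invoke \Cref{thm:Rickards_Morita_Theorem_I} as the main engine. For (1) $\Rightarrow$ (4) I would restrict a given equivalence to the subcategories of compact objects, which coincide with $\K[b]{\proj{A}}$ by \Cref{rmk:compacts-DA}. For (4) $\Rightarrow$ (1), I would take $T$ to be the image of the regular representation $A$ under an equivalence $\K[b]{\proj{A}} \simeq \K[b]{\proj{B}}$: this $T$ satisfies the hypotheses of \Cref{thm:Rickards_Morita_Theorem_I}~(iii) because the relevant Hom-vanishing and generation conditions are intrinsic to the triangulated structure and hold for $A$ inside $\K[b]{\proj{A}}$. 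The theorem then supplies a standard equivalence $-\Lotimes[A] M$. The equivalence (2) $\Leftrightarrow$ (4) is essentially the remark following \Cref{thm:Rickards_Morita_Theorem_I}, which provides the $\DerCat[-]{\Mod{-}}$-version of this argument and in fact does not require flatness.

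Once a standard equivalence is in hand, the implications (1) $\Rightarrow$ (3), (5) are routine: the two-sided tilting complex $M$ is a perfect complex of $A$-$B$-bimodules, so derived tensoring with $M$ preserves boundedness on either side. In the right coherent case, $M$ additionally has finitely presented cohomology as a complex of right $B$-modules, so this restriction further preserves the property of having finitely presented cohomology, establishing (5).

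The main obstacle is the converse directions (3) $\Rightarrow$ (4) and (5) $\Rightarrow$ (4), since an equivalence $F$ of bounded derived categories need not a priori restrict to the perfects. My approach is to characterise $\K[b]{\proj{A}}$ inside $\DerCat[b]{\Mod{A}}$ (respectively $\DerCat[b]{\mmod{A}}$) by an equivalence-invariant criterion, for instance: $X$ is perfect if and only if for every $Y$ in the ambient category there exists $N = N_Y$ with $\Hom[\DerCat[b]{\Mod{A}}]{X}{Y[n]} = 0$ for $|n| > N$. The forward direction is a direct computation on a bounded complex of finitely generated projectives. The backward direction---the crux---requires upgrading bounded Hom-amplitude into the existence of a bounded resolution by finitely generated projectives, and is cleanest in the coherent case (5), where finite projective dimension together with finitely presented cohomology forces the desired resolution. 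Given such a criterion, $T = F(A)$ is perfect and, symmetrically, $F^{-1}(B)$ is perfect, so $F$ restricts to an equivalence $\K[b]{\proj{A}} \simeq \K[b]{\proj{B}}$, which is (4). For (3) $\Rightarrow$ (4) in the general non-coherent setting one can alternatively reduce to (2) $\Rightarrow$ (4) by first extending $F$ to an equivalence $\DerCat[-]{\Mod{A}} \simeq \DerCat[-]{\Mod{B}}$, using that every bounded-above complex is a homotopy colimit of its brutal truncations lying in $\DerCat[b]{\Mod{A}}$.
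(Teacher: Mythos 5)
The paper does not actually give a proof of this theorem; it attributes it to Rickard's papers and then indicates, in \Cref{sec:metrics} via \Cref{thm:Rickards_Morita_Theorem_III}, a modern alternative argument built on completions of triangulated categories. So your proposal cannot be ``the same as the paper's'': it is rather an attempted reconstruction of Rickard's original argument. Most of your outline is along sensible lines (pivot on condition~(4), produce a tilting object $T=F(A)$, invoke \Cref{thm:Rickards_Morita_Theorem_I}, and use a two-sided tilting complex to go back down to the smaller categories), but the central step is wrong.

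The gap is in your proposed characterisation of perfect complexes. Inside $\DerCat[b]{\Mod{A}}$, the condition ``for every $Y$ there is $N_Y$ with $\Hom{X}{Y[n]}=0$ for $|n|>N_Y$'' does \emph{not} single out $\K[b]{\proj{A}}$. It only forces $X$ to have cohomology of bounded (relative) projective dimension. For instance, over a field $\kk$ the infinite-rank module $\kk^{(\mathbb{N})}$, placed in degree~$0$, has $\Hom{\kk^{(\mathbb{N})}}{Y[n]}=0$ whenever $n$ lies outside the amplitude of $Y$, yet it is not perfect. The same failure occurs over any $A$ for $A^{(\mathbb{N})}$. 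So the criterion does not distinguish ``finitely generated'' from ``projective of unbounded rank'', and your (3)~$\Rightarrow$~(4) collapses. This is precisely why characterising the perfects requires either the ambient $\DerCat{\Mod{A}}$ with coproducts (compactness), or the ambient $\DerCat[b]{\pcoh{A}}$ (where Hom-amplitude \emph{does} cut out the perfects, as in \Cref{pr:perfect}). The coherent case (5), where the ambient is $\DerCat[b]{\mmod{A}}=\DerCat[b]{\pcoh{A}}$, is therefore not merely ``cleanest''---it is the only case in which your proposed criterion can be made to work without further input. Your fallback for (3)~$\Rightarrow$~(4), first extending $F$ to $\DerCat[-]{\Mod{A}}\to\DerCat[-]{\Mod{B}}$ via ``homotopy colimits of brutal truncations'', is not available either: homotopy colimits in a triangulated category are not functorial (and $\DerCat[-]{\Mod{A}}$ does not have small coproducts), so the purported extension is not well defined as a triangulated functor. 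The paper's completion approach (Cauchy sequences taken in the cocomplete $\DerCat{\Mod{A}}$, plus \Cref{pr:Milnor} for full faithfulness) is designed exactly to sidestep this issue, and Rickard's own route also works through $\K[-]{\proj{}}$ with separate non-formal arguments. You should consult Rickard's Sections~8 of~\cite{Ric89a} (or the completion formalism in \Cref{sec:metrics}) for how this direction is actually handled.
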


\begin{remark}\label{rmk:pcoh}
    In \Cref{thm:Rickards_Morita_Theorem_II}, statement \eqref{thm:Rickards_Morita_Theorem_II:it:pcohA} can be replaced by the following, which does not require the right coherence assumption:
\begin{enumerate}
    \item[(5')] There exists a $\kk$-linear equivalence of triangulated categories 
    \[
        \DerCat[b]{\pcoh{A}}\stackrel{\sim}{\longrightarrow}\DerCat[b]{\pcoh{B}}.
    \]
\end{enumerate}
    Here, $\pcoh{A}$ denotes the exact category of \emph{pseudo-coherent} $A$-modules, that is those $A$-modules that admit a projective resolution with finitely generated components, and similarly for $\pcoh{B}$. Clearly, $\pcoh{A}=\mmod{A}$ whenever $A$ is right coherent. We refer the reader to~\cite{Nee90} for the definition of the bounded derived category of an exact category.
\end{remark}

\begin{remark}
    The proof of \Cref{thm:Rickards_Morita_Theorem_II} is rather technical. It amounts to the question of how various subcategories of the derived category of an algebra are determined by one another. Perhaps the most crucial point is to understand how $\K[b]{\proj{A}}$ determines the bounded derived category $\DerCat[b]{\pcoh{A}}$ of pseudo-coherent modules (\Cref{rmk:pcoh}). In \Cref{sec:metrics} we indicate how Neeman answers this question using metrics and completions of triangulated categories.
\end{remark}

\subsection{Modular representation theory and splendid equivalences}

The aim of this section is to relate and place into context the main ideas of Rickard's work in \cite{Rickard96} which, building on \cite{Ric89a} and \cite{Ric91}, introduced a special type of derived equivalence for blocks of group algebras in the context of Brou\'e's abelian defect group conjecture. 
Namely, Rickard introduced the notion of a \emph{splendid equivalence}  to structurally explain the phenomenon of Brou\'e's notion of isotypy of characters of finite groups, which are compatible families of perfect isometries. Indeed, the main contributions of Rickard's paper are the definition of splendid equivalences and the proof that a splendid equivalence between principal blocks of group algebras induces an isotypy at the level of their character groups. 

The representation theory of finite groups falls into two sorts: either the characteristic of the underlying field $k$ divides the order of the group or not. In the latter case, by Maschke's Theorem, every representation of the group decomposes into a direct sum of irreducible representations or equivalently a group algebra $kG$ is semisimple. 
Otherwise, this is no longer true and  the representation theory in this case,  which is commonly referred to as the \emph{modular representation theory of finite groups}, is much more complicated and many open questions remain. 
For example,  while for symmetric groups over the complex numbers everything is well understood, over a field of characteristic $p$ 
even the dimensions of the irreducible representations are not known in general. 

Two important approaches to tackle these open problems are to work over a so-called \emph{p-modular system} consisting  of a complete discrete valuation ring $\calo$ with maximal ideal $\mathfrak{m}$ such that the field of fractions $K$ of $\calo$ is a field of characteristic zero which is a splitting field for $G$ and all of its subgroups and the residue field $k$ of $\calo$ has characteristic $p >0$.  The structure of the group algebra $\mathcal{O} G$ is then closely related to both the structure of the group algebra $kG$ over the residue field and also to the structure of the semi-simple group algebra $KG$ over the field of characteristic zero. Similarly, there are deep connections between the categories of modules over $KG, \mathcal{O} G$ and $kG$, respectively.
A second approach is given by a fundamental program of research on global-to-($p$-)local  statements relating the representations or characters of a group to that of its  normalisers or centralisers of $p$-subgroups. Many of the most important conjectures such as the McKay conjecture \cite{McKay71} (the proof of which was recently completed in  \cite{CabanesSpaeth24}), Alperin's weight conjecture \cite{Alperin87},  and Brou\'e's  abelian defect group conjecture  \cite{Broue90} are in this global-to-local context and aim to provide reduction steps to smaller groups with an easier structure. 

The group algebra $kG$ of a finite group $G$ is decomposed into blocks  by the unique decomposition of its multiplicative identity  into a sum $e_1 + e_2 + \cdots + e_n$ of primitive orthogonal central idempotents.  So a  block $kG.e_i$ is an algebra with identity $e_i$ and is an indecomposable two-sided ideal of $kG$. This decomposition lifts uniquely to $\calo G$ inducing a natural bijection between the blocks of $kG$ and those of $\calo G$. Amongst the blocks, the \emph{principal block} is the one which does not annihilate the trivial module $k$ on which $G$ acts as the identity. 
 
 Each block $kG.e$ has an associated $p$-subgroup $P\subseteq G$, called the \textit{defect group} of the block, which measures how far the block is from being semisimple. More precisely, $P$ is a maximal $p$-subgroup such that $kP$ is isomorphic to a direct summand of $kG.e$ as a $kP$-$kP$-bimodule. Equivalently, $P$ is the minimal $p$-subgroup such that every $kG$-module is relatively $P$-projective, that is a direct summand of an induced module $kP\otimes_{kG} M$ for some $kP$-module $M$. We note that this defines $P$ uniquely up to conjugation in $G$. 
In the case of the principal block the defect groups are the Sylow $p$-subgroups and so its structure can be seen to be as at least as complicated as that of any other block.

The following result of Brauer \cite{Brauer56} can be seen as one of the foundational steps of the global-to-local programme. Namely, he showed that there is a bijective correspondence between blocks of $kG$ with defect group $D$ and blocks of $kN_G(D)$ with defect group $D$. The block $kN_G(D).f$ corresponding to a block $kG.e$ is called the \textit{Brauer correspondent of $kG.e$}.

The study of the representation theory of finite groups has its origin in their character theory over $K$. Observations stemming from properties or correspondences of characters and their deeper structural significance, for example in terms of equivalences of the associated module or derived categories, give rise to much of the contemporary work in modular representation theory.  




As an example, given finite groups $G$ and $H$, any derived equivalence between blocks
$KG.e$ and $KH.f$ is given by a  correspondence (with signs) of irreducible characters, that is in fact, an isometry with respect to the natural inner product of characters. However, passing to derived equivalences over $\calo$,  the corresponding isometries have extra properties, giving rise to so-called \emph{perfect isometries}. Namely, the following theorem by Brou\'e can be taken as a definition of  a perfect isometry.

\begin{theorem}[{\cite{Broue90}}]
For finite groups $G$ and $H$, a derived equivalence between blocks $\calo G.e$ and $\calo H.f$ induces an isometry between the (virtual) characters of $KG.e$ and $KH.f$ which preserves the subgroups of characters spanned by characters of projective modules for $\calo G.e$ and $\calo H.f$.
\end{theorem}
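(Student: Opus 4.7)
The plan is to use Rickard's derived Morita theorem (\Cref{thm:Rickards_Morita_Theorem_I}) to replace the given equivalence by one of standard type, base-change along $\calo\to K$ to land in the semisimple setting, and then read off the isometry and the preservation of projective characters from the resulting derived equivalence.

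First, since $\calo G.e$ and $\calo H.f$ are $\calo$-free of finite rank (in particular $\calo$-flat), \Cref{thm:Rickards_Morita_Theorem_I} supplies a two-sided tilting complex $T$ of $(\calo G.e,\calo H.f)$-bimodules such that $F\coloneqq -\Lotimes[\calo G.e]T$ implements a derived equivalence between the two blocks; by \Cref{thm:Rickards_Morita_Theorem_II} this functor restricts to an exact equivalence $\K[b]{\proj{\calo G.e}}\xrightarrow{\sim}\K[b]{\proj{\calo H.f}}$. Next, I would apply $K\otimes_\calo -$ termwise to $T$ to produce a two-sided tilting complex $T_K$ of $(KG.e,KH.f)$-bimodules; the tilting conditions for $T_K$ follow from those of $T$ by flatness of $\calo\to K$. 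This yields a derived equivalence $F_K\colon \DerCat[b]{\mmod{KG.e}}\xrightarrow{\sim}\DerCat[b]{\mmod{KH.f}}$ compatible with $F$ along base change.

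Since $K$ has characteristic zero, both $KG.e$ and $KH.f$ are semisimple, so the Grothendieck groups of their module categories are the respective virtual character groups. Let $\phi$ be the isomorphism induced by $F_K$ on $K_0$, given by $[M]\mapsto \sum_n(-1)^n[H^n(F_K(M))]$. Any triangulated equivalence preserves the Euler form
\begin{equation*}
\langle[M],[N]\rangle=\sum_{n\in\ZZ}(-1)^n\dim_K\Ext[KG.e]{M}{N}[n],
\end{equation*}
which in the semisimple setting coincides with the Schur inner product on characters; hence $\phi$ is an isometry. The subgroup of projective characters of $KG.e$ is, by definition, the image of the homomorphism $K_0(\proj{\calo G.e})\to K_0(\mmod{KG.e})$ sending $[P]\mapsto[K\otimes_\calo P]$, equivalently the image of $K_0(\K[b]{\proj{\calo G.e}})$ under the same base change. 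Because $F$ restricts to an equivalence on the level of perfect complexes and base change intertwines $F$ and $F_K$, the map $\phi$ sends the subgroup of projective characters of $KG.e$ into that of $KH.f$; by the same argument applied to the inverse equivalence, $\phi$ restricts to an isomorphism between these subgroups.

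The principal obstacle I would anticipate is the base-change step: one needs to verify carefully, perhaps after replacing $T$ by a $K$-flat or bimodule-projective resolution, that $T_K$ inherits the two-sided tilting property and that $F_K$ is genuinely compatible with $F$ at the derived level, not merely at the level of the underlying complexes of bimodules. Once this compatibility is in place, both the isometry property and the preservation of projective characters follow formally from the semisimplicity of $KG.e$ and $KH.f$ together with \Cref{thm:Rickards_Morita_Theorem_II}.
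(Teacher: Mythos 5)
The paper states this theorem without proof (it is quoted from Brou\'e as background for the notion of perfect isometry), so there is no in-text argument to compare against; judged on its own, your proposal is essentially the standard proof and it is sound: use \Cref{thm:Rickards_Morita_Theorem_I} (the blocks are $\calo$-free, hence flat) to realise the equivalence by a two-sided tilting complex $T$ of bimodules, extend scalars along the flat map $\calo\to K$, and read off the isometry from the Euler form on $K_0$ of the semisimple algebras $KG.e$ and $KH.f$, with the projective-character lattices handled via the induced equivalence on perfect complexes and the identification $K_0(\K[b]{\proj{\calo G.e}})\cong K_0(\proj{\calo G.e})$. The base-change compatibility you flag is indeed the only point needing care, and it is unproblematic once $T$ is chosen, as Rickard shows one may, to be a bounded complex of bimodules that are projective on each side; note also that the restriction to perfect complexes is most directly justified by compactness (\Cref{rmk:compacts-DA}) rather than by \Cref{thm:Rickards_Morita_Theorem_II}. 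One caveat worth recording: Brou\'e's perfect isometries satisfy additional arithmetic conditions (integrality of $\mu(g,h)/|C_G(g)|$ and vanishing of $\mu(g,h)$ unless $g$ and $h$ are simultaneously $p$-regular), which are proved by analysing the character $\mu$ of the bimodule complex $K\otimes_\calo T$ itself; your $K_0$-level argument establishes exactly the statement as quoted but would not by itself yield those finer properties.
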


Note that a perfect isometry can also be characterised by purely arithmetic properties of characters which can be `read off' the character table of the groups. 

Since a derived equivalence over $\calo$ induces  a derived equivalence over $k$, we obtain as a consequence that the corresponding blocks $kG.e$ and $kH.f$ have the same number of simple modules. 

Brou\'e put forward the idea that a perfect isometry  should be a shadow of a structural correspondence at the level of the corresponding derived categories, giving rise to  Brou\'e's abelian defect group conjecture \cite{Broue90}: 

\begin{conjecture}[Brou\'e's Abelian Defect Group Conjecture] \label{conj:Brouederived}
Let $G$ be a finite group and let $\calo G. e$ be a block of $\calo G$ with abelian defect group given by the $p$-group $D$. Then, there is an equivalence of triangulated categories
$$\operatorname{D}^{b}(\mmod{\calo G.e}) \simeq \operatorname{D}^{b}(\mmod{\calo N_G(D).f}),$$
where $\calo N_G(D).f$ is the Brauer correspondent of $\calo G.e$ with respect to $D$. 
\end{conjecture}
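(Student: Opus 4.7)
Since \Cref{conj:Brouederived} remains open in general, the plan I sketch is programmatic rather than a complete proof. By Rickard's Derived Morita Theorem (\Cref{thm:Rickards_Morita_Theorem_I}), it would suffice to exhibit a tilting complex $T\in\K[b]{\proj{\calo G.e}}$ with $\Hom[\DerCat{\Mod{\calo G.e}}]{T}{T}\cong\calo N_G(D).f$. By the left-right symmetry of derived equivalence one may just as well work on the local side and try to build $T$ as a bounded complex of relatively projective modules for $\calo N_G(D).f$, then induce it up to the full group.

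The natural framework for this induction is Rickard's notion of splendid equivalence. The plan is to search for a two-sided tilting complex of $(\calo G.e)$--$(\calo N_G(D).f)$-bimodules whose components are direct summands of permutation bimodules of the form $\calo[(G\times N_G(D))/\Delta Q]$, indexed by diagonal $p$-subgroups $\Delta Q$ with $Q\leq D$. The decisive feature of any such splendid complex is that, for every $p$-subgroup $R\leq D$, applying the Brauer construction at $R$ yields a derived equivalence between the Brauer-correspondent blocks of $C_G(R)$ and $C_{N_G(D)}(R)$, compatibly as $R$ varies. The abelian hypothesis on $D$ is precisely what allows one to hope that these local equivalences can be forced, via the Green correspondence applied on the centralisers, to be Morita equivalences at the leaves, and then glued back together along the subgroup lattice of $D$ to recover a global splendid tilting complex.

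The hard part is exactly this lifting-and-gluing step: at each stage one must verify that the candidate complex has vanishing positive self-extensions and the prescribed endomorphism algebra, and there is no general mechanism for doing so. In practice the conjecture has been settled only in restricted settings, typically by one of two complementary strategies. The first is to produce the tilting complex by an explicit categorical construction, as in Chuang--Rouquier's use of $\mathfrak{sl}_2$-categorification for blocks of symmetric groups with abelian defect. The second is to reduce, via the classification of finite simple groups, to blocks of quasi-simple groups, and then verify splendid derived equivalences case by case in the spirit of the Kessar--Malle programme. A realistic overall attack therefore splits into a reduction theorem isolating the quasi-simple case and a type-by-type construction of splendid tilting complexes; the second step remains the genuine obstruction.
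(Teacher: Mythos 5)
There is no proof of this statement to compare yours against: \Cref{conj:Brouederived} is stated in the paper as a conjecture, and the paper explicitly records that both it and its splendid strengthening (\Cref{conj:Brouesplendid}) remain open in general. You recognise this, and your text is, by your own admission, a programme rather than a proof. As a programme it is broadly faithful to the paper's discussion: reduction to exhibiting a (splendid) tilting complex via Rickard's Derived Morita Theorem (\Cref{thm:Rickards_Morita_Theorem_I}), the compatibility of splendid complexes with the Brauer construction (\Cref{thm:splendidBrauer}) and the resulting isotypies (\Cref{thm:splendidinducesisotpy}), and the known cases (cyclic defect groups, Chuang--Rouquier for symmetric and general linear groups) that the paper also lists.

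Two steps in your sketch, however, are presented as if they were available mechanisms when they are in fact open problems. First, the claim that the abelian hypothesis on $D$ lets one force the local equivalences at centralisers to be Morita equivalences via Green correspondence and then \emph{glue} them along the subgroup lattice of $D$ into a global splendid tilting complex has no general justification; \Cref{thm:splendidBrauer} goes only in the opposite direction, from a global splendid complex to local ones, and the reverse passage is precisely the unsolved heart of the conjecture. Second, unlike the McKay conjecture, no general reduction of Brou\'e's abelian defect group conjecture to blocks of quasi-simple groups via the classification of finite simple groups is currently known, so the two-step strategy of ``reduction theorem plus type-by-type verification'' cannot yet be set up; the paper accordingly cites only case-by-case results. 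Neither point makes your programme unreasonable, but both should be flagged as open obstructions rather than as steps of an argument, since otherwise your text reads as claiming more than the current state of the art supports.
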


The Brauer correspondent of a block is defined by the \emph{Brauer construction}, which also plays a role in the definition of Rickard's splendid equivalences. 
Namely, let $Q$ be a $p$-subgroup of a finite group $G$. The Brauer construction with respect to $Q$ is the functor 
$$-(Q)\colon \mmod{kG} \longrightarrow \mmod{kN_G(Q)}$$
given by $$\textstyle M(Q) \coloneqq M^Q / (\sum_{Q' <Q} {\rm Tr}^Q_{Q'}(M^{Q'}),$$
where for a finite group $H$ acting on $M$, $M^H$ denotes the $H$-fixed points of $M$ and ${\rm Tr}^Q_{Q'}(M^{Q'})$ are the relative traces of the ${Q'}$-fixed points of $M$ where ${Q'}$ is a proper subgroup of $Q$,  that is ${\rm Tr}^Q_{Q'}\colon  M^{Q'} \to M^Q $  is the map sending $m \in M^{Q'}$ to $ \sum_{x{Q'} \in Q/{Q'}} xm$. Note that if $M = k[X]$ is a permutation $kG$-module, then $M(Q)$ is the permutation $kN_G(Q)$- module $k[X^Q]$ on the fixed points of $Q$ acting on $X$. (If $Q$ is not a $p$-subgroup then there is no such functor on permutation modules.)  Furthermore, if $M$ is a $k$-algebra and $G$ acts by $k$-algebra automorphisms, then $M(Q)$ is also a $k$-algebra (with action of $N_G(Q)$ by automorphisms).  Thus the Brauer correspondent of a block $kG.e$ with defect group $D$ is the unique block $kN_G(D).f$ having defect group $D$ with the property that $kG.e(\Delta D)$ and $kN_G(D).f(\Delta D)$ are isomorphic as $k(C_G(D) \times C_G(D))$-modules. In particular, 
the Brauer correspondent 
of the principal block 
of $kG$
is the principal block 
of $kN_G(D)$.

In order to relate  Brou\'e's abelian defect group conjecture to  Rickard's splendid equivalences, we need the notion of an \emph{isotypy}, a strengthening of a perfect isometry. For this we say that two groups $G$ and $H$ with a common Sylow $p$-subgroup $P$ have the same \emph{$p$-local structure}  if for any subgroups $Q$ and $Q'$ of $P$ and an isomorphism $\theta: Q \to Q'$, there is an element $g \in G$ such that, for all $q \in Q$, $\theta(q) = g^{-1}qg$ if and only if there is an element $h \in H$ such that, for all $q \in Q$, $\theta(q) = h^{-1}qh$. Note that this is always true when $P$ is abelian and $H$ contains $N_G(P)$, which is precisely the setup for Brou\'e's abelian defect group conjecture.

Brou\'e then further conjectured that, for $G$ and $H$  finite groups with a common abelian Sylow $p$-subgroup and common \emph{$p$-local structure},   there should not only be a perfect isometry,  but an \emph{isotypy}, that is not only a character correspondence on the level of the principal blocks of  $G$ and $H$, but rather that there should be  a family of induced and compatible perfect isometries between the characters of the principal blocks of $ C_G(Q)$ and $ C_H(Q)$, for all $p$-subgroups $Q$ of $D$, see \cite{Broue90} for the precise definition. 

Splendid equivalences were then  introduced by Rickard  in \cite{Rickard96} to give a structural explanation of Brou\'e's isotypies for principal blocks. Later Harris \cite{Harris99} and Linckelmann \cite{Linckelmann98}, see also Puig \cite{Puig99}, generalised Rickard's notion of splendid equivalence to arbitrary $p$-blocks.  As such, splendid equivalences are derived equivalences between blocks of group algebras with the same $p$-local structure and which induce splendid equivalences at the $p$-local level. The name splendid equivalence derives from the fact that a splendid equivalence is induced by \textbf{SPL}it \textbf{END}omorphism two-sided tilting complexes of summands of permutation modules \textbf{I}nduced from \textbf{D}iagonal subgroups.  


We now explain in more detail what a splendid equivalence is. For this, we denote by $R$ either $\calo$ or $k$ if the statements hold in both cases. 

\begin{definition}[{\cite{Rickard96}}]\label{def:splendid}
Let $A$ and $B$ be symmetric $R$-algebras which are projective over $R$.  A bounded complex of finitely generated $A$-$B$-bimodules is a \emph{split-endomorphism two-sided tilting complex} - abbreviated SPLEND tilting complex in the following - if it is a complex of terms which are projective as both left and as right modules and
$${\rm Hom}_A(X,X) \cong {\rm Hom}_R(X,R) \otimes_A X \cong B $$
and 
$${\rm Hom}_B(X,X) \cong  X \otimes_B {\rm Hom}_R(X,R) \cong A $$
in the homotopy categories of complexes of $A$-bimodules and $B$-bimodules, respectively. 
\end{definition}

 Recall from Rickard's derived Morita Theory, that there is an equivalence between $D^b(\mmod A) \simeq D^b(\mmod B)$ of triangulated categories if and only if there exists a two-sided tilting complex of $A$-$B$-bimodules. Rickard notes that, under the hypothesis of Definition~\ref{def:splendid} on $A$ and $B$, any two-sided tilting complex of $A$-$B$-bimodules is quasi-isomorphic to a SPLEND two-sided tilting complex.

 The last step to obtain a SPLENDID tilting complex from a SPLEND tilting complex is to ensure good behaviour when applying the Brauer construction in the setting of block algebras of finite groups.   

\begin{definition}[{\cite{Rickard96}}]
    Let $G$ and $H$ be finite groups such that there is a common Sylow $p$-subgroup $P$ and let $RG.e$ and $RH.f$ be blocks of $G$ and $H$, respectively. A \emph{splendid tilting complex} $X$ is a complex of finitely generated $RG.e$-$RH.f$-bimodules such that 
\begin{itemize}
    \item[(i)] $X$ is a split-endomorphism tilting complex; 
    \item[(ii)] considering  $X$ as a complex of $R[G \times H]$-modules, its terms are direct summands of relatively $\Delta P$-projective permutation modules, where $\Delta P$ is the diagonal subgroup $\{(p,p) \in G \times H| p \in P\}$ of $G \times H$.
\end{itemize}    
\end{definition}



Two of the main results of \cite{Rickard96} in terms of principal blocks
are now the following. 

\begin{theorem}[{\cite[Theorem 4.1]{Rickard96}}]\label{thm:splendidBrauer}
    Let $G$ and $H$ be two finite groups that have a common Sylow $p$-subgroup $P$ and the same $p$-local structure and let $kG.e$ and $kH.f$ be the principal blocks of  $kG$ and $kH$, respectively and  $X$ a splendid tilting complex of $kG.e$-$kH.f$-bimodules. 
    Then, applying the Brauer construction with respect to the diagonal subgroup $\Delta Q$ of $G \times H$, the complex $X(\Delta Q)$ is a splendid tilting complex of $kC_G(Q).e(\Delta Q)$-$kC_H(Q).f(\Delta Q)$-bimodules for all subgroups $Q$ of $P$ where $kC_G(Q).e(\Delta Q)$ and $kC_H(Q).f(\Delta Q)$ denote the respective principal blocks.
\end{theorem}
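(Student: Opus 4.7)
The plan is to verify the two defining properties of a splendid tilting complex for $X(\Delta Q)$: namely, that it is a split-endomorphism two-sided tilting complex and that its terms are direct summands of relatively $\Delta P'$-projective permutation modules for an appropriate subgroup $P' \leq C_G(Q) \cap C_H(Q)$. Both parts hinge on the good behaviour of the Brauer construction on the full subcategory of $p$-permutation modules (i.e.\ direct summands of permutation modules), and the argument is almost formal once the right compatibilities are in place.

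First I would collect the key compatibility properties of $(-)(\Delta Q)$ on $p$-permutation bimodules. Explicitly: for a permutation $k[G\times H]$-module $k[X]$, one has $k[X](\Delta Q) = k[X^{\Delta Q}]$, which realises $(-)(\Delta Q)$ as an additive functor sending $p$-permutation $k[G\times H]$-modules to $p$-permutation $k[C_G(Q)\times C_H(Q)]$-modules. On this subcategory, $(-)(\Delta Q)$ commutes with $k$-linear duality, and (by Brou\'e's analysis of the Brauer functor) it commutes with tensor products over the intermediate group ring, meaning that for $M$ a $p$-permutation $kG$-$kH$-bimodule and $N$ a $p$-permutation $kH$-$kK$-bimodule whose relative projectivity is controlled by diagonal $p$-subgroups of a common Sylow $p$-subgroup, one has a natural isomorphism
\[
    (M \otimes_{kH} N)(\Delta Q) \;\cong\; M(\Delta Q) \otimes_{k C_H(Q)} N(\Delta Q).
\]
The crucial input here is that morphisms in the $p$-permutation category that factor through modules with smaller vertex are annihilated by $(-)(\Delta Q)$, so that the functor is well defined and monoidal on the relevant ideal quotient.

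Second I would verify condition (ii). Each term of $X$ is, by assumption, a direct summand of a permutation $k[G\times H]$-module of the form $k[(G\times H)/\Delta P']$ with $P' \leq P$. A direct double-coset computation shows that $((G\times H)/\Delta P')^{\Delta Q}$ is a disjoint union of $C_G(Q) \times C_H(Q)$-orbits whose point stabilisers are diagonal subgroups contained in $\Delta C_P(Q)$, so the Brauer construction produces a direct summand of a $\Delta C_P(Q)$-projective permutation $k[C_G(Q)\times C_H(Q)]$-module, as required. Third, I would verify condition (i) by applying $(-)(\Delta Q)$ termwise to the given homotopy equivalences
\[
    X \otimes_{kH.f} \operatorname{Hom}_k(X,k) \simeq kG.e, \qquad \operatorname{Hom}_k(X,k) \otimes_{kG.e} X \simeq kH.f,
\]
using that $(-)(\Delta Q)$ preserves homotopies between $p$-permutation complexes (because it is additive on this subcategory) and combining this with the monoidality and duality compatibility of step one. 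The right-hand sides become $(kG.e)(\Delta Q) \cong k C_G(Q) \cdot e(\Delta Q)$ and symmetrically for $H$, giving the desired endomorphism identities for $X(\Delta Q)$.

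The main obstacle will be the monoidality statement for $(-)(\Delta Q)$ with respect to $\otimes_{kH}$ rather than the external tensor product $\otimes_k$ over which Brou\'e's classical Brauer functor is manifestly monoidal. Writing the internal tensor product as a coequaliser and showing that $(-)(\Delta Q)$ commutes with it requires using that the $p$-permutation bimodules appearing in a splendid complex have vertex in $\Delta P$, so that all modules and morphisms stay within the subcategory on which the Brauer construction annihilates the ideal of morphisms factoring through smaller defect; this is exactly where the hypotheses that $G$ and $H$ share the Sylow $p$-subgroup $P$ and share the same $p$-local structure get used in an essential way.
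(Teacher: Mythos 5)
The survey you are reading does not prove this statement: it records it as Theorem~4.1 of Rickard's paper \emph{Splendid equivalences: derived categories and permutation modules} and cites it, so the only meaningful comparison is with Rickard's original argument. Your outline does follow that argument in structure: restrict attention to $p$-permutation (bi)modules, use $k[\Omega](\Delta Q)\cong k[\Omega^{\Delta Q}]$ and additivity of the Brauer construction, check the relative-projectivity condition on the terms, establish compatibility of $(-)(\Delta Q)$ with $k$-duality and with $-\otimes_{kH}-$ for the relevant bimodules, and then transport the homotopy equivalences defining the split-endomorphism tilting property. So the skeleton is right.

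The gap is in your second step, and it is exactly the point where the group-theoretic hypotheses do their work. The ``direct double-coset computation'' does not produce diagonal stabilisers: the stabiliser in $C_G(Q)\times C_H(Q)$ of a $\Delta Q$-fixed coset $(g,h)\Delta P'$ is $\{(gxg^{-1},hxh^{-1}) : x\in P',\ gxg^{-1}\in C_G(Q),\ hxh^{-1}\in C_H(Q)\}$, which is a \emph{twisted} diagonal $p$-subgroup $\{(y,\phi(y))\}$ with $\phi$ induced by conjugation through $G$ on one side and through $H$ on the other. Replacing it, up to $C_G(Q)\times C_H(Q)$-conjugacy, by an honest diagonal subgroup of $\Delta C_P(Q)$ is precisely where the assumptions that $P$ is a common Sylow $p$-subgroup and that $G$ and $H$ have the same $p$-local structure must be invoked (together with a Sylow argument inside the centralisers, and, in the principal-block setting, Brauer's third main theorem to know that $kG.e(\Delta Q)$ and $kH.f(\Delta Q)$ are the expected nonzero principal blocks). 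Your proposal instead asserts the diagonal form of the stabilisers as an unproblematic computation and locates the essential use of the hypotheses in the monoidality step; but Rickard's key lemma, the isomorphism $(M\otimes_{kH}N)(\Delta Q)\cong M(\Delta Q)\otimes_{kC_H(Q)}N(\Delta Q)$ for bimodules whose summands have vertices in $\Delta P$, is proved by an explicit comparison of $\Delta Q$-fixed points of the composed permutation bases, not by the ideal-quotient/coequaliser mechanism you sketch, which as written is an assertion rather than an argument. So the decisive verifications --- condition (ii) and the tensor compatibility --- are the two places your plan would need genuine proofs, and the hypotheses of the theorem enter at the former, not (only) the latter.
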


Moreover, Rickard showed that the splendid equivalences over $k$ in Theorem \ref{thm:splendidBrauer} lift uniquely to  splendid equivalences over $\calo$. Furthermore, the above induces a splendid equivalence between the principal blocks of $\calo C_G(Q)$ and $\calo C_H(Q)$, for each subgroup $Q$ of $P$.  With this, the connection of splendid equivalences and isoptypies of characters is shown.

\begin{theorem}[{\cite[Theorem 6.3]{Rickard96}}]\label{thm:splendidinducesisotpy}
 Let $G$ and $H$ be two finite groups with a common Sylow $p$-subgroup and the same $p$-local structure. Then a splendid equivalence between the principal blocks of $\calo G$ and $\calo H$ induces an isotypy between these blocks. 
\end{theorem}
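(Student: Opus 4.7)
The plan is to unpack the definition of isotypy into its constituent compatible family of perfect isometries, and to produce each of these from the Brauer construction applied to the given splendid tilting complex. I start with a splendid tilting complex $X$ of $\calo G.e$-$\calo H.f$-bimodules realising the splendid equivalence between the principal blocks (so that $e$ and $f$ are the principal block idempotents of $\calo G$ and $\calo H$). First I reduce $X$ modulo the maximal ideal $\mathfrak{m}$ of $\calo$ to obtain a splendid tilting complex $\bar{X}$ of $kG.\bar{e}$-$kH.\bar{f}$-bimodules; this reduction is the input required to apply \Cref{thm:splendidBrauer}.

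For each $p$-subgroup $Q$ of the common Sylow $p$-subgroup $P$, I then apply \Cref{thm:splendidBrauer} with respect to the diagonal subgroup $\Delta Q \le G\times H$, producing a splendid tilting complex $\bar{X}(\Delta Q)$ of $kG.\bar{e}(\Delta Q)$-$kH.\bar{f}(\Delta Q)$-bimodules. By Brauer's first main theorem, together with the compatibility of the Brauer construction on principal block idempotents and the common $p$-local structure of $G$ and $H$, these block summands may be identified with the principal blocks of $kC_G(Q)$ and $kC_H(Q)$ after restriction along $C_G(Q)\times C_H(Q) \hookrightarrow N_{G\times H}(\Delta Q)$. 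The uniqueness-of-lift statement noted just before \Cref{thm:splendidinducesisotpy} then promotes each $\bar{X}(\Delta Q)$ to a splendid equivalence between the principal blocks of $\calo C_G(Q)$ and $\calo C_H(Q)$, for every $Q \le P$.

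Each such lifted splendid equivalence is in particular a derived equivalence over $\calo$, and therefore, by Broué's theorem recalled earlier in this section, induces a perfect isometry between the virtual characters of the principal blocks of $KC_G(Q)$ and $KC_H(Q)$ which preserves the characters of projective modules. To conclude that the resulting family of perfect isometries indexed by $Q \le P$ forms an isotypy in the sense of Broué, one must verify the compatibility condition for every inclusion $Q \le Q'$ of $p$-subgroups. This compatibility is traceable to the iterative nature of the Brauer construction: $\bar{X}(\Delta Q')$ agrees with an appropriate further Brauer construction applied to $\bar{X}(\Delta Q)$, and this identity at the level of complexes must be passed through Broué's passage from derived equivalences to perfect isometries in order to see that the resulting character correspondences commute with the relevant restriction and generalised decomposition maps.

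I expect the principal obstacle to lie precisely in this last, character-theoretic translation. One needs a sharp dictionary between the Brauer construction on a bounded complex of summands of $\Delta P$-projective permutation bimodules and the generalised decomposition map at the level of characters, and a careful bookkeeping of how the $\calo$-lift enters Broué's definition of a perfect isometry in order to propagate the compatibility from one $Q$ to the next. Once this bookkeeping is settled, \Cref{thm:splendidBrauer}, Rickard's unique lifting of splendid equivalences from $k$ to $\calo$, and Broué's characterisation of perfect isometries assemble into the coherent family of compatible perfect isometries that, by definition, constitute an isotypy between the principal blocks of $\calo G$ and $\calo H$.
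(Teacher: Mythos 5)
Your strategy coincides with the one this survey sketches (and with Rickard's own): reduce the splendid complex to $k$, apply \Cref{thm:splendidBrauer} at each diagonal subgroup $\Delta Q$, lift the resulting local splendid complexes back to $\calo$ using the uniqueness of lifts, and feed the local derived equivalences into Brou\'e's theorem to obtain perfect isometries between the principal blocks of $\calo C_G(Q)$ and $\calo C_H(Q)$. Two points need attention, one small and one decisive. The small one: the identification of the block summand $kG.\bar e(\Delta Q)$ with the principal block of the local group is governed by Brauer's \emph{third} main theorem (principal blocks correspond to principal blocks under the Brauer correspondence), not the first; moreover the passage from $N_{G\times H}(\Delta Q)$ down to $C_G(Q)\times C_H(Q)$, where the isotypy actually lives, has to be argued, and this is where the hypothesis of common $p$-local structure (and, in the formulation recalled before \Cref{thm:splendidinducesisotpy}, the case $H=N_G(P)$) enters.

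The decisive point is the one you defer: the compatibility of the family of perfect isometries with the generalised decomposition maps is not residual bookkeeping but the actual content of \Cref{thm:splendidinducesisotpy}; without it one has only a family of perfect isometries indexed by the subgroups of $P$, which is strictly weaker than an isotypy. The missing ingredient is the character-theoretic property of $p$-permutation modules: for a bounded complex $X$ whose terms are direct summands of permutation modules, a $p$-element $u$ and a $p'$-element $s$ centralising $u$, the value at $us$ of the (virtual) character of $X$ equals the value at $s$ of the Brauer character of the Brauer construction $X(\langle u\rangle)$. It is precisely this identity, applied to the splendid complex and its Brauer constructions $X(\Delta Q)$, that converts the fact that $X(\Delta Q')$ is an iterated Brauer construction of $X(\Delta Q)$ into the commutativity of the character-level squares involving the generalised decomposition maps, i.e.\ into the defining condition of an isotypy. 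Your proposal names the right objects and the right intermediate results, but it stops short of this step, so as written it establishes the existence of the local perfect isometries without proving that they assemble into an isotypy.
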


Motivated by Theorems~\ref{thm:splendidBrauer} and~\ref{thm:splendidinducesisotpy},  and based on subsequent work of Harris~\cite{Harris99}, Linckelmann~\cite{Linckelmann98} and Puig~\cite{Puig99} on non-principal blocks, Brou\'e's abelian defect group conjecture can be strengthened to

\begin{conjecture}\label{conj:Brouesplendid}
Let $G$ be a finite group and $\calo G.e$ a block of $G$ with an abelian $p$-group $D$ as its defect group. Then, there is a splendid equivalence between $\calo G.e$ and its Brauer correspondent $\calo N_G(D).f$  with respect to $D$.       
\end{conjecture}

  Brou\'e's original abelian defect group conjecture (Conjecture~\ref{conj:Brouederived}), remains open in general. However, it is noteworthy, that in cases where Conjecture~\ref{conj:Brouederived} has been verified, it is known that the refinement, Conjecture~\ref{conj:Brouesplendid}, also holds, even if it is not a formal consequence. 
There have been proofs, some of which are quite spectacular, in several special cases. For example, to name but a few,  Rickard himself showed the conjecture to hold for the principal block of the alternating group $\mathfrak A_5$ in \cite{Rickard96}.  In \cite{Rouquier98} the case of  cyclic defect groups has been shown. The case of  symmetric and general linear groups in non-defining characteristic has been shown in  \cite{ChuangRouquier08}  with a key step in the proof being the proof in \cite{ChuangKessar02} for the so-called Rock blocks of symmetric groups. Indeed, Chuang and Rouquier proved that blocks of symmetric groups with the same defect are derived equivalent which together with the previous work by Chuang and Kessar on RoCK blocks yielded the conjecture in the case of symmetric groups.  For double covers of symmetric groups at odd primes, very recently Conjecture~\ref{conj:Brouesplendid} has been shown in \cite{BrundanKleshchev25}, see also \cite{EbertLauraVera23},  by constructing derived equivalences from odd categorifications of $\mathfrak{sl}_2$ and building on the results in \cite{KleshchevLivesey25} in which the analogue of the Chuang-Kessar result on RoCK blocks has been shown for the RoCK blocks of double covers of symmetric groups. In other recent work, in \cite{DudasVaragnoloVasserot19}  building on \cite{Livesey15}, Conjecture~\ref{conj:Brouederived} has been shown in the case of unipotent blocks  of unitary groups for linear primes, the complimentary case of unitary primes still being open. However, whether Conjecture~\ref{conj:Brouesplendid} holds in this case  is still open. Moreover, in the context of finite reductive groups, Brou\'e's abelian defect group conjecture is closely linked with the underlying algebraic geometry, see \cite{BroueMalle93} and a proof of another related conjecture of Brou\'e's \cite{Broue90Red}, establishing a splendid equivalence between (sums of) blocks of finite reductive groups in non-defining characteristic  \cite{BonnafeRouquier03, BonnafeDatRouquier17}.

\section{New perspectives on derived Morita theory}
\label{section:new_perspectives}

In this section we discuss some further categorical aspects of derived Morita theory which arise in particular from the triangulated structure of the derived category of an algebra. One aspect of Rickard's theory is that there are several versions of the derived category for a given algebra, and it turns out that one can construct one from another via a notion of completion. Also, we look at further classes of triangulated categories beyond the derived
categories of associative algebras, where a Morita theory has been developed.

\subsection{Quillen's exact categories}

Recall that an \emph{exact category} (in the sense of Quillen~\cite{Qui73}) is an additive category equipped with a class of \emph{admissible} short exact sequences, also called \emph{conflations} satisfying suitable axioms, see~\cite{Bue10} for a survey of the theory. An extension-closed subcategory of an abelian category inherits the structure of an exact category with the short exact sequences contained in it as the admissible short exact sequences. For example, for a $k$-algebra $A$, the admissible short exact sequences in $\proj{A}\subseteq\Mod{A}$ are the split short exact sequences. Conversely, set-theoretic issues aside, every exact category is equivalent to an extension-closed subcategory of an abelian category with its induced exact structure~\cite[Proposition~A.2]{Kel90}. Exact categories provide a convenient enlargment of the class of abelian categories in which one can still do homological algebra. We refer the reader to~\cite{Nee90} for the definition of the bounded derived category of an exact category, which is used in what follows.

\subsection{Completions of triangulated categories}
\label{sec:metrics}

Let us return to Rickard's Derived Morita Theorem. In this section we
indicate how completions of triangulated categories arise when proving
some parts of this. We assume that $\kk$ is a commutative ring and observe that no flatness assumptions on the algebras are needed.

\begin{theorem}
    \label{thm:Rickards_Morita_Theorem_III}
    Let $A$ and $B$ be a pair of $\kk$-algebras. The following statements are equivalent:
    \begin{enumerate}
    \item There exists a $\kk$-linear equivalence of triangulated categories 
    \[
        \DerCat{\Mod{A}}\stackrel{\sim}{\longrightarrow}\DerCat{\Mod{B}}.
    \]
    \item There exists a $\kk$-linear equivalence of triangulated categories 
    \[
        \DerCat[b]{\pcoh{A}}\stackrel{\sim}{\longrightarrow}\DerCat[b]{\pcoh{B}}.
    \]
    \item There exists a $\kk$-linear equivalence of triangulated categories 
    \[
        \DerCat[b]{\proj{A}}\stackrel{\sim}{\longrightarrow}\DerCat[b]{\proj{B}}.
    \]
  \end{enumerate}
\end{theorem}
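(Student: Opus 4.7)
The plan is to prove the two equivalences (1) $\Leftrightarrow$ (3) and (2) $\Leftrightarrow$ (3) separately: the first uses Rickard's flatness-free argument, while the second is supplied by Neeman's framework of good metrics and Cauchy completions of triangulated categories. Throughout I note that $\DerCat[b]{\proj{A}} = \K[b]{\proj{A}}$, since $\proj{A}$ is split exact, so statement (3) coincides with item \eqref{thm:Rickards_Morita_Theorem_II:it:modA} of \Cref{thm:Rickards_Morita_Theorem_II}; in particular the flatness-free half of \Cref{thm:Rickards_Morita_Theorem_I} is available.

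For (1) $\Rightarrow$ (3), I argue as in \Cref{rmk:compacts-DA}: a $\kk$-linear triangulated equivalence $\DerCat{\Mod{A}} \simeq \DerCat{\Mod{B}}$ preserves compactness, hence restricts to an equivalence on the subcategories of perfect complexes, which are precisely $\K[b]{\proj{A}}$ and $\K[b]{\proj{B}}$. For the converse (3) $\Rightarrow$ (1), the image $T$ of the regular representation $A$ under an equivalence $\K[b]{\proj{A}} \simeq \K[b]{\proj{B}}$ is a tilting complex in the sense of \Cref{thm:Rickards_Morita_Theorem_I} with $\End[\DerCat{\Mod{B}}]{T} \cong A$, and the flatness-free implication (3) $\Rightarrow$ (1) of that theorem yields a triangulated equivalence $\DerCat{\Mod{A}} \simeq \DerCat{\Mod{B}}$.

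For (3) $\Rightarrow$ (2), I equip $\K[b]{\proj{A}}$ with its canonical good metric, in which a sequence converges to zero when its terms become concentrated in arbitrarily negative cohomological degrees. Neeman's completion theorem then asserts that the Cauchy completion of $\K[b]{\proj{A}}$ under this metric carries a natural triangulated structure and is canonically equivalent to $\DerCat[b]{\pcoh{A}}$; intuitively, a Cauchy sequence records the brutal truncations of a resolution of a pseudo-coherent complex by finitely generated projectives. Because both the metric and the completion are intrinsic to the triangulated structure, any $\kk$-linear triangulated equivalence $\K[b]{\proj{A}} \simeq \K[b]{\proj{B}}$ extends uniquely to an equivalence $\DerCat[b]{\pcoh{A}} \simeq \DerCat[b]{\pcoh{B}}$. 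For the converse (2) $\Rightarrow$ (3), I characterise $\K[b]{\proj{A}}$ inside $\DerCat[b]{\pcoh{A}}$ intrinsically, for instance as the thick subcategory of objects $X$ for which $\Hom{X}{-}$ has bounded cohomological amplitude on $\DerCat[b]{\pcoh{A}}$; this selects precisely the perfect complexes, and any $\kk$-linear triangulated equivalence restricts accordingly.

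The main obstacle is the identification of the Cauchy completion of $\K[b]{\proj{A}}$ with $\DerCat[b]{\pcoh{A}}$: one must verify that the canonical metric is good in Neeman's sense, that Cauchy sequences modulo null sequences assemble into a triangulated category, and that the resulting comparison functor is an equivalence. Once this technical core is in place, the rest of the argument amounts to transporting intrinsically defined triangulated subcategories along the given equivalences.
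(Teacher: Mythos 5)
Your overall architecture is close to the paper's: both treat (3) as the pivot, and your proofs of (1)$\Rightarrow$(3) (via compactness) and (3)$\Rightarrow$(1) (via the flatness-free part of \Cref{thm:Rickards_Morita_Theorem_I}; the paper actually spells out Keller's dg-endomorphism-algebra argument at this step) match. Where you deviate is in the two remaining implications, and one of them has a genuine gap.

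The gap is in (2)$\Rightarrow$(3). You propose to single out the perfect complexes inside $\DerCat[b]{\pcoh{A}}$ as those $X$ for which $\Hom{X}{-}$ has ``bounded cohomological amplitude.'' But cohomological amplitude is measured against the standard $t$-structure, equivalently against $\Hom{A}{-}$, and an arbitrary $\kk$-linear triangulated equivalence $\DerCat[b]{\pcoh{A}}\simeq\DerCat[b]{\pcoh{B}}$ does not respect these $t$-structures --- indeed, shifting the $t$-structure is precisely what tilting does --- so this description does not transport along the equivalence and the claimed restriction does not follow. The paper's whole technical contribution at this step (\Cref{pr:perfect}) is to produce a characterisation that uses \emph{nothing but Hom-vanishing}: one defines, for each object $U$ and each $n$, subcategories $\T_U^{>n}$ and $\T_U^{\le n}$ of $\T:=\DerCat[b]{\pcoh{A}}$, calls $U$ \emph{initial} if for every $V$ there are $p,q$ with $\T_U^{>p}\subseteq\T_V^{>q}$, shows $A$ is initial, and then proves $\DerCat[b]{\proj{A}}=\T_U$ for any initial $U$. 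Since initiality and the $\T_U$'s are intrinsic, this is preserved by any triangulated equivalence. You need something of this form in place of the amplitude condition.

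Your (3)$\Rightarrow$(2) is a different route from the paper's and is repairable but not as stated. You equip $\K[b]{\proj{A}}$ with the metric whose $n$-th term consists of complexes concentrated in degrees $\le -n$, and claim this metric is ``intrinsic to the triangulated structure.'' It is not, for the same reason as above: it is defined via $\Hom{A}{-}$. What is true --- and must be argued --- is that a triangulated equivalence $\K[b]{\proj{A}}\simeq\K[b]{\proj{B}}$ carries the $A$-metric to a metric \emph{equivalent} in Neeman's sense to the $B$-metric (because the image of $A$ is a bounded, generating complex), and equivalent good metrics have the same completion. The paper avoids this bookkeeping entirely by proving (1)$\Rightarrow$(2) --- identifying $\DerCat[b]{\pcoh{A}}$ inside $\DerCat{\Mod{A}}$ with the homotopy colimits of \emph{compactly supported Cauchy sequences} in $\K[b]{\proj{A}}$, a notion quantified over \emph{all} compact test objects and hence genuinely equivalence-invariant --- and obtains (3)$\Rightarrow$(2) as the composite (3)$\Rightarrow$(1)$\Rightarrow$(2). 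Your direct route via Neeman metric completions can be made to work, but it requires the metric-equivalence argument rather than an appeal to intrinsicality.
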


\begin{remark}
We view $\pcoh{A}$ and $\proj{A}$ as exact categories. When the ring $A$ is right coherent, then the category $\pcoh{A}$ equals $\mmod{A}$ and is an
abelian category. The canonical functor $\K[b]{\proj{A}}\to\DerCat[b]{\proj{A}}$ is an equivalence.
\end{remark}

\begin{remark}
  The chain of exact inclusions
  \[\proj{A}\hookrightarrow\pcoh{A}\hookrightarrow \Mod{A}\]
  induces fully faithful exact functors
    \[\DerCat[b]{\proj{A}}\rightarrow
      \DerCat[b]{\pcoh{A}}\rightarrow \DerCat{\Mod{A}}.\] The
    proof of Theorem~\ref{thm:Rickards_Morita_Theorem_III} shows that any of its
equivalences yields an equivalence when \emph{restricted} to one
of the subcategories. On the other hand, any of the above
equivalences \emph{extends} to an equivalence when one passes to
one of the bigger subcategories.
\end{remark}

Let us sketch the proof of the theorem, and further details will be provided afterwards. We need the following definition from \cite{Kra20}. A \emph{Cauchy sequence} in a triangulated category $\C$ is a sequence of morphisms
  \[X=( X_0\to X_1\to X_2\to\cdots)\] such that for all $C\in\C$ the
  induced map $\Hom{C}{X_i}\to \Hom{C}{X_{i+1}}$ is invertible for
  $i\gg 0$. In that case $\Hom{C}{X_i}=\colim\Hom{C}{X_j}$ for $i\gg 0$, and we say that $X$ is \emph{compactly supported} if for
  all $C\in\C$ we have $\colim\Hom{C}{\Sigma^n(X_j)}=0$ for $|n|\gg
  0$.

\begin{proof}[Proof of Theorem~\ref{thm:Rickards_Morita_Theorem_III}]
  (1)$\Rightarrow$(3) We have already seen that the canonical
  functor $\DerCat[b]{\proj{A}}\rightarrow \DerCat{\Mod{A}}$
  identifies $\DerCat[b]{\proj{A}}$ with the full subcategory of
  $\DerCat{\Mod{A}}$ consisting of the compact objects. Clearly, any
  equivalence preserves compactness.
    
  (3)$\Rightarrow$(1) We follow \cite{Kel94}. A quasi-inverse of
  $\DerCat[b]{\proj{A}}\stackrel{\sim}{\rightarrow}\DerCat[b]{\proj{B}}$
  identifies $B$ with a tilting object in $\DerCat[b]{\proj{A}}$,
  which we denote by $T$. We write $E:=\REnd[A]{T}$ for the differential
  graded endomorphism algebra of the complex $T$ and note that
  $H^\bullet(E)\cong B$. Viewing $A$ as a differential graded algebra,
  the functor $\RHom[A]{T}{-}$ yields an equivalence of triangulated categories
  $\DerCat{A}\stackrel{\sim}{\rightarrow}\DerCat{E}$ between the
  derived categories of differential graded modules, since $T$ is a
  tilting object. Now consider the pair of quasi-isomorphisms
  $B\twoheadleftarrow \tau^{\le 0}E\hookrightarrow E$, where
  $\tau^{\le 0}E$ denotes the soft truncation of $E$ concentrated in
  non-positive degrees. This yields the desired chain of equivalences
  \[\DerCat{\Mod{A}}=\DerCat{A}\stackrel{\sim}{\rightarrow}\DerCat{E}
    \stackrel{\sim}{\rightarrow}\DerCat{\Mod{B}}\] where the last
  functor is given by restriction along
  $\tau^{\le 0}E\hookrightarrow E$ composed with the left adjoint of
  restriction along $B\twoheadleftarrow \tau^{\le 0}E$. The
  construction of the functor shows that it sends $T$ to $B$.
  
  (1)$\Rightarrow$(2) The canonical functor $\DerCat[b]{\pcoh{A}}\rightarrow \DerCat{\Mod{A}}$ identifies
  the category $\DerCat[b]{\pcoh{A}}$ with the full subcategory of
  $\DerCat{\Mod{A}}$ consisting of the homotopy colimits of compactly
    supported Cauchy sequences in $\DerCat[b]{\proj{A}}$. It remains to observe that any
  equivalence preserves homotopy colimits and the notion of a compactly
  supported Cauchy sequence, given that $\DerCat[b]{\proj{A}}$ identifies with the full subcategory of compact objects in $\DerCat{\Mod{A}}$.

   (2)$\Rightarrow$(3) Proposition~\ref{pr:perfect} below provides an intrinsic
   description of the full subcategory consisting of the perfect
   complexes inside  $\DerCat[b]{\pcoh{A}}$ which uses nothing but
   the triangulated structure. Thus an equivalence of triangulated categories
   $\DerCat[b]{\pcoh{A}}\stackrel{\sim}{\rightarrow}\DerCat[b]{\pcoh{B}}$
   restricts to an  equivalence
   $\DerCat[b]{\proj{A}}\stackrel{\sim}{\rightarrow}\DerCat[b]{\proj{B}}$.
 \end{proof}

 The composite (3)$\Rightarrow$(1)$\Rightarrow$(2) yields the
 implication (3)$\Rightarrow$(2), which suggests to consider
 $\DerCat[b]{\pcoh{A}}$ as a completion of
 $\DerCat[b]{\proj{A}}$. This uses the following proposition which is
 based on a classical argument due to Milnor \cite{Mil62}.

We set $\C:=\DerCat[b]{\proj{A}}$ and write $\Mod{\C}$ for the
category of $\kk$-linear functors $\C^\op\to\Mod{\kk}$. 
   
  \begin{proposition}\label{pr:Milnor}
    The  functor $\DerCat{\Mod{A}}\to
    \Mod{\C}$ given by
    \[X\longmapsto h_X:=\Hom{-}{X}|_\C\] restricts to a fully faithful
    functor  \[\DerCat[b]{\pcoh{A}}\longrightarrow
      \Mod{\C}.\]
  \end{proposition}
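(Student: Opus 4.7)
The approach is to use Milnor's classical exact sequence for homotopy colimits in triangulated categories, together with the identification (provided in the proof of \Cref{thm:Rickards_Morita_Theorem_III}) of $\DerCat[b]{\pcoh{A}}$ with the homotopy colimits in $\DerCat{\Mod{A}}$ of compactly supported Cauchy sequences in $\C=\DerCat[b]{\proj{A}}$. Fix $X,Y\in\DerCat[b]{\pcoh{A}}$ and write $X=\operatorname{hocolim} X_i$ with $(X_i)$ a compactly supported Cauchy sequence in $\C$; the goal is to show that the canonical comparison map
\[\Hom{X}{Y}\longrightarrow\Hom[\Mod{\C}]{h_X}{h_Y}\]
is an isomorphism.

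First I would compute the right-hand side. Since each $X_i$ is compact in $\DerCat{\Mod{A}}$, one has $\Hom{C}{X}=\colim_i\Hom{C}{X_i}$ for every $C\in\C$, so $h_X|_\C\cong\colim_i y(X_i)$ in $\Mod{\C}$, where $y\colon\C\hookrightarrow\Mod{\C}$ is the Yoneda embedding. Applying the Yoneda lemma and the universal property of the colimit yields
\[\Hom[\Mod{\C}]{h_X}{h_Y}\cong\lim_i\Hom[\Mod{\C}]{y(X_i)}{h_Y}\cong\lim_i h_Y(X_i)\cong\lim_i\Hom{X_i}{Y}.\]
Next I would apply $\Hom{-}{Y}$ to the defining triangle $\bigoplus X_i\to\bigoplus X_i\to X$ of the homotopy colimit. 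Using the identification $\Hom{\bigoplus X_i}{Y}\cong\prod\Hom{X_i}{Y}$, the resulting long exact sequence collapses into Milnor's short exact sequence
\[0\to{\lim}^1_i\Hom{X_i[1]}{Y}\longrightarrow\Hom{X}{Y}\longrightarrow\lim_i\Hom{X_i}{Y}\to 0,\]
whose surjection on the right is precisely the canonical comparison map above. Fully faithfulness therefore reduces to the vanishing $\lim^1_i\Hom{X_i[1]}{Y}=0$.

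The main obstacle is this $\lim^1$ vanishing, and I would attack it by choosing the Cauchy sequence $(X_i)$ concretely—say, as the brutal truncations of a projective resolution $P\to X$ by finitely generated projectives—so that the cofibre of each transition map $X_i\to X_{i+1}$ is a shift of a single finitely generated projective concentrated in a strictly decreasing cohomological degree. Because $Y\in\DerCat[b]{\pcoh{A}}$ has bounded cohomological amplitude, for $i$ exceeding that amplitude both $\Hom{\operatorname{cone}(X_i\to X_{i+1})}{Y}$ and $\Hom{\operatorname{cone}(X_i\to X_{i+1})[1]}{Y}$ vanish. Hence the transition maps in the tower $(\Hom{X_i[1]}{Y})_i$ become isomorphisms for $i\gg 0$, so the tower is eventually constant and its $\lim^1$ vanishes trivially. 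This is the step where the boundedness of $Y$—equivalently, the compactly supported condition for the Cauchy sequence representing $Y$—enters in an essential way.
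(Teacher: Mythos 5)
Your proof is correct and follows the same Milnor exact-sequence approach that the paper delegates to \cite[Proposition~5.2.8]{Kra22} (indeed, the paper explicitly flags the argument as ``based on a classical argument due to Milnor''): realise $X$ as a homotopy colimit of a compactly supported Cauchy sequence in $\C$, identify $\Hom[\Mod{\C}]{h_X}{h_Y}$ with $\lim_i\Hom{X_i}{Y}$, and kill the $\lim^1$ error term in the Milnor sequence using the eventual stabilisation of the tower, which comes from the boundedness of $Y$. One small imprecision: the identity $\Hom{C}{X}\cong\colim_i\Hom{C}{X_i}$ follows from the compactness of $C\in\C$ (applied to the homotopy colimit triangle), not from the compactness of the $X_i$ as you state; this does not affect the argument.
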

  \begin{proof}
    See Proposition~5.2.8 in \cite{Kra22}.
  \end{proof}

  Any object in $\DerCat[b]{\pcoh{A}}$ is quasi-isomorphic to a
  bounded above complex of finitely generated projective
  $A$-modules, and using successive truncations it is not difficult to show that the
  objects in $\DerCat[b]{\pcoh{A}}$ are up to isomorphism precisely the homotopy colimits of
  compactly supported Cauchy sequences in $\DerCat[b]{\proj{A}}$; see
  \cite[Section~5.2]{Kra22} for details. In view of
  Proposition~\ref{pr:Milnor} it follows that $X\mapsto h_X$
  identifies $\DerCat[b]{\pcoh{A}}$ with the full subcategory of
  $\Mod{\C}$ consisting of the colimits of compactly supported Cauchy
  sequences in $\C$ (viewed as a full subcategory of $\Mod{\C}$ via
  the Yoneda embedding).\footnote{To be more precise: the homotopy colimit of a compactly supported Cauchy
  sequence in $\C$ taken in $\DerCat{\Mod{A}}$ is actually a colimit in $\DerCat[b]{\pcoh{A}}$.} The distinguished triangles in  $\DerCat[b]{\pcoh{A}}$
  are precisely the colimits of compactly supported Cauchy
  sequences of distinguished triangles in $\C$.

  The above procedure of completing the category of perfect complexes
  is discussed in \cite{Kra20}. However, Neeman proposes in
  \cite{Nee18,Nee20} a much more refined notion of `metric completions' for
  triangulated categories. More precisely, he shows that the
  completion of a triangulated category with respect to a \emph{good metric}
  yields a triangulated category. In this way $\DerCat[b]{\pcoh{A}}$
  is obtained from $\DerCat[b]{\proj{A}}$, but this works as well the
  other way round, so $\DerCat[b]{\proj{A}}$
  is obtained as a metric completion from $\DerCat[b]{\pcoh{A}}$.

  In the following we provide a fairly direct proof of (2)
  $\Rightarrow$ (3) that does not use any completion arguments. This is inspired by Neeman and taken from
  \cite[Section~9.2]{Kra22}, but here we remove the simplifying
  assumption that the ring $A$ is right coherent.  
  
 Set $\T:=\DerCat[b]{\pcoh{A}}$. For $n\in\mathbb Z$ set
\[\T^{> n}:=\{X\in\T\mid H^i(X)=0\textrm{ for all }i \le n\}\]
and
\[\T^{\le n}:=\{X\in\T\mid H^i(X)=0\textrm{ for all }i > n\}.\]
For an object $U\in\T$ and $n\in\mathbb Z$ set
\[\T_U^{> n}:=\{X\in\T\mid \Hom{U}{\Sigma^i(X)}=0\textrm{ for all }i \le n\}\]
and
\[\T_U^{\le n}:=\{X\in\T\mid \Hom{X}{\T_U^{> n}}=0\}.\]
Note that $\Hom{X}{Y}=0$ for all $X\in\T^{\le n}$ and
$Y\in\T^{>n}$. Also, we have $\T^{>n}=\T_A^{>n}$ since
$H^i(X)=\Hom{A}{\Sigma^i(X)}$ for all $X\in\T$ and $i\in\mathbb Z$.

\begin{lemma}
  If $U\in\T^{\le n}$, then $\T^{>n}\subseteq\T_U^{>0}$.
\end{lemma}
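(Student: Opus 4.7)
The plan is to reduce the statement to the orthogonality relation $\Hom{V}{W}=0$ for all $V\in\T^{\le n}$ and $W\in\T^{>n}$, which is recorded in the paragraph just before the lemma. For $U\in\T^{\le n}$ and $X\in\T^{>n}$, what must be shown is $\Hom{U}{\Sigma^i(X)}=0$ whenever $i\le 0$, so it is enough to check that $\Sigma^i(X)$ itself belongs to $\T^{>n}$ for every such $i$ and then invoke orthogonality with $V:=U$.

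This last verification is a one-line cohomological computation. From the identity $H^j(\Sigma^i X)\cong H^{j+i}(X)$ and the hypothesis $H^k(X)=0$ for all $k\le n$, I would deduce that $H^j(\Sigma^i X)=0$ whenever $j+i\le n$, i.e.\ whenever $j\le n-i$. For $i\le 0$ one has $n-i\ge n$, hence $\Sigma^i X\in\T^{>n-i}\subseteq\T^{>n}$, and the stated orthogonality gives $\Hom{U}{\Sigma^i(X)}=0$ as required.

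Consequently there is no genuine obstacle in this particular step: the real content is packaged in the orthogonality relation on $\T$, which descends from the standard t-structure on $\DerCat{\Mod{A}}$ via the fully faithful inclusion $\DerCat[b]{\pcoh{A}}\hookrightarrow\DerCat{\Mod{A}}$, together with the identification $H^j(X)\cong\Hom{A}{\Sigma^j(X)}$ recalled just before the lemma (which is exactly the equality $\T^{>n}=\T_A^{>n}$). In this sense the lemma is a preparatory observation: cohomological positivity with respect to the generator $A$ upgrades to orthogonality against any object $U$ lying in the truncated part $\T^{\le n}$, a statement which will presumably be used to compare the intrinsic subcategories $\T_U^{>n}$ and $\T_U^{\le n}$ with the canonical $\T^{>n}$ and $\T^{\le n}$ in the subsequent arguments leading to the implication (2)$\Rightarrow$(3) of \Cref{thm:Rickards_Morita_Theorem_III}.
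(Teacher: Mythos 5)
Your proof is correct and takes essentially the same route as the paper: both arguments reduce the lemma to the orthogonality $\Hom{V}{W}=0$ for $V\in\T^{\le n}$, $W\in\T^{>n}$ recorded just before the statement, via a one-line shift computation. The only cosmetic difference is that you shift $X$ (observing $\Sigma^i(X)\in\T^{>n}$ for $i\le 0$) whereas the paper shifts $U$ (observing $\Sigma^{-i}(U)\in\T^{\le n}$ for $i\le 0$); the two moves are equivalent under $\Hom{U}{\Sigma^i(X)}\cong\Hom{\Sigma^{-i}(U)}{X}$.
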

\begin{proof}
  The assumption $U\in\T^{\le n}$ implies $\Sigma^i(U)\in\T^{\le n}$ for all
  $i\ge 0$. This implies $\Hom{U}{\Sigma^i(X)}=0$ for all $i\le 0$ and
  $X\in\T^{>n}$. Therefore $\T^{>n}\subseteq\T_U^{>0}$.
\end{proof}

For objects $U,V\in\T$ write $U\le V$ if there are $p,q\in\mathbb Z$ such
that $\T_U^{>p}\subseteq \T_V^{>q}$. Call $U\in\T$
\emph{initial} if $U\leq V$ for
all $V\in\T$. The preceding lemma shows that $A$ is initial. Set
\[\T_U:=\{X\in\T\mid \operatorname{Hom}(X,\T_U^{\le n})=0\textrm{ for all }n\ll 0\}.\]

 The following result says that $\DerCat[b]{\proj{A}}$ admits an intrinsic description as a subcategory
 of $\T=\DerCat[b]{\pcoh{A}}$, since the notion of an initial object is purely categorical. 

\begin{proposition}\label{pr:perfect}
  Let $U\in\T$ be initial. Then $\DerCat[b]{\proj{A}}=\T_U$.
\end{proposition}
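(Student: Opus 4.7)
The plan is to first show that $\T_U$ depends only on the equivalence class of $U$ under the preorder $\leq$, so that it suffices to handle $U = A$ (initial by the preceding lemma), and then to identify $\T_A$ with $\DerCat[b]{\proj{A}}$ using the standard $t$-structure on $\T$ and a finite-projective-dimension characterisation of perfect complexes.

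For the reduction to $U = A$: given two initial objects $U, V \in \T$, the relations $U \leq V$ and $V \leq U$ yield integers with $\T_U^{>p} \subseteq \T_V^{>q}$ and a symmetric inclusion. A direct computation shows that suspension interacts with the constructions by $\Sigma \T_U^{>n+1} = \T_U^{>n}$, and consequently $\Sigma \T_U^{\leq n+1} = \T_U^{\leq n}$ upon taking right orthogonals. Applying powers of $\Sigma$ propagates the first inclusion to $\T_V^{\leq n} \subseteq \T_U^{\leq n + (p-q)}$ for every $n \in \ZZ$, and analogously for the reverse direction. Since membership in $\T_U$ requires only $\Hom{X}{\T_U^{\leq m}} = 0$ for some $m$, a constant shift in $m$ is absorbed, yielding $\T_U = \T_V$.

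For the case $U = A$, note that $\T_A^{>n} = \T^{>n}$ by the identification $H^i = \Hom{A}{\Sigma^i(-)}$, and hence $\T_A^{\leq n} = \T^{\leq n}$ via the $t$-structure characterisation ${}^\perp \T^{>n} = \T^{\leq n}$ (which applies because $\pcoh{A}$ is closed under kernels of epimorphisms in $\Mod{A}$, so the standard $t$-structure restricts to $\T$). The inclusion $\DerCat[b]{\proj{A}} \subseteq \T_A$ then follows from the $t$-structure axioms: a perfect complex $X$ has bounded cohomology, say in degrees $[a, b]$, so $X \in \T^{\geq a}$ and $\Hom{X}{\T^{\leq n}} = 0$ for every $n < a$. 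Conversely, if $X \in \T_A$ there is some $n_0$ with $\Hom{X}{Y} = 0$ for all $Y \in \T^{\leq n_0}$; taking $Y = M[k]$ for $M \in \pcoh{A}$ and $k$ large enough yields $\Ext[A]{X}{M}[k] = 0$ for all large $k$ and all pseudo-coherent $M$. A syzygy argument applied to a resolution $P^\bullet \to X$ by finitely generated projectives (which exists because $X \in \T$) then shows $P^\bullet$ may be truncated after finitely many steps, whence $X$ is perfect.

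The main obstacle is this last syzygy argument. When $A$ is not right coherent, the cohomology modules $H^i(X)$ of a bounded complex in $\T$ need not themselves be pseudo-coherent, so one cannot reduce to the case of a single module via the $t$-structure. Instead the argument must be carried out with the resolution $P^\bullet$ directly: one identifies the pseudo-coherent image of a sufficiently deep differential $P^{-N-1} \to P^{-N}$ as the testing object, and exploits the vanishing of the corresponding Ext class to force this syzygy to be a direct summand of $P^{-N}$, hence projective, yielding the desired bounded projective resolution of $X$.
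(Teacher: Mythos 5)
Your treatment of the reduction to $U=A$ and of the inclusion $\T_A\subseteq\DerCat[b]{\proj{A}}$ is essentially the paper's own argument: the paper tests a non-perfect $X$ against its brutal truncations $\sigma^{\le n}X$, which for $n\ll 0$ are shifted pseudo-coherent syzygies lying in $\T^{\le n}\subseteq\T_A^{\le n}$, and the non-vanishing of the canonical map $X\to\sigma^{\le n}X$ is exactly the contrapositive of your splitting argument; you also correctly identify that without right coherence one must test against syzygies of a projective resolution rather than against cohomology modules. So that half, and the reduction via shift-invariance of the classes $\T_U^{>n}$, are fine.

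The genuine gap is in the inclusion $\DerCat[b]{\proj{A}}\subseteq\T_A$. You deduce $\T_A^{\le n}=\T^{\le n}$ from the claim that the standard $t$-structure restricts to $\T=\DerCat[b]{\pcoh{A}}$ because $\pcoh{A}$ is closed under kernels of epimorphisms. It does not restrict in general: truncation produces cohomology modules which are cokernels of maps of finitely generated projectives, and these are finitely presented but need not be pseudo-coherent when $A$ is not right coherent. For instance, take a two-term perfect complex $A^m\to A$ whose image is a finitely generated ideal that is not finitely presented; its truncation $\tau^{\ge 0}$ is the quotient module, which does not lie in $\T$, so no truncation triangle for this object exists inside $\T$. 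The only inclusion available for free (from $\Hom{\T^{\le n}}{\T^{>n}}=0$ together with $\T_A^{>n}=\T^{>n}$) is $\T^{\le n}\subseteq\T_A^{\le n}$, and that is precisely the direction you cannot use here: membership of a perfect $X$ in $\T_A$ requires vanishing of $\Hom{X}{-}$ on the a priori larger class $\T_A^{\le n}$. Moreover, the orthogonality you invoke is misstated even where the $t$-structure exists: the $t$-structure kills maps from $\T^{\le n}$ into $\T^{>n}$, not maps the other way, so for $X$ perfect with cohomology in degrees $[a,b]$ it is false that $\Hom{X}{\T^{\le n}}=0$ for all $n<a$; over $A=\ZZ$ the cone $X$ of $2\colon\ZZ\to\ZZ$ has cohomology concentrated in degree $0$, yet $\Hom{X}{\ZZ[1]}\cong\Ext[\ZZ]{\ZZ/2\ZZ}{\ZZ}[1]\neq 0$ although $\ZZ[1]\in\T^{\le -1}$. (The correct bound is the lowest degree of a bounded complex of finitely generated projectives representing $X$, but even after that repair you are still only testing against $\T^{\le n}$.) The paper avoids all of this: it obtains the inclusion by observing that $\T_U$ is a thick subcategory of $\T$ containing $A$, hence contains $\K[b]{\proj{A}}\simeq\DerCat[b]{\proj{A}}$; no identification of $\T_A^{\le n}$ with $\T^{\le n}$ is used. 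To close your gap you should either prove directly the vanishing of $\Hom{X}{\T_A^{\le n}}$ for $n\ll 0$ (for example by reducing, via thickness, to the single object $X=A$ as the paper does), or restrict to the right coherent case --- which is exactly the simplifying assumption the paper set out to remove.
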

\begin{proof}
  Let $V\in\T$.  Then there are $p,q\in\mathbb Z$ such that
  $\T_U^{>p}\subseteq \T_V^{>q}$. This implies
  $\T_U^{\le p}\supseteq\T_V^{\le q}$, and therefore
  $\T_U\subseteq \T_V$. It follows that $\T_U=\T_V$ when $U$ and $V$
  are initial. Thus we may assume $U=A$. Clearly, $\T_U$ is a thick
  subcategory that contains $A$. Thus $\T_U$ contains
  $\DerCat[b]{\proj{A}}$. Now fix an object $X\in\T$ that is not in
  $\DerCat[b]{\proj{A}}$. We may replace $X$ by a bounded above
  complex of finitely generated projective $A$-modules and write
  $\sigma^{\le n}X$ for the brutal truncation at $n$.  Then the
  canonical morphism $X\to\sigma^{\le n}X$ is not null-homotopic for
  $n\ll 0$. Note that $\sigma^{\le n}X$ lies in $\T^{\le
    n}$. Thus $X\not\in\T_U$.
\end{proof}

\subsection{Morita theory for compactly generated triangulated categories}

Further study of the properties of derived categories of algebras and similar triangulated categories brings the following class of triangulated categories to the fore. In what follows, we denote the suspension functor of an abstract triangulated category by $\Sigma$.

\begin{definition}
    Let $\T$ be a triangulated category and suppose that $\T$ admits small coproducts. We say that $\T$ is \emph{compactly generated} if there exists a set $\G\subseteq\T$ consisting of compact objects and which generate $\T$ as a triangulated category with small coproducts.
\end{definition}

The derived category of an algebra $A$ is the prototypical example of a compactly generated triangulated category, with the regular representation of $A$ serving as a compact generator. Recall also that we claimed in \Cref{rmk:compacts-DA} that $\K[b]{\proj{A}}$ consists precisely of the compact objects in $\DerCat{A}$. This is a special case of the following general theorem.

\begin{theorem}[Neeman~{\cite{Nee92}}]
\label{thm:Neeman_compacts}
    Let $\T$ be a compactly generated triangulated category and $\G\subseteq\T$ a set of compact generators. Then, the full subcategory of $\T$ spanned by the compact objects coincides with its smallest idempotent complete triangulated subcategory which contains $\G$.
\end{theorem}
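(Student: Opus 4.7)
The plan is to establish both inclusions of
\[
\thick{\G}\subseteq\T^c\quad\text{and}\quad\T^c\subseteq\thick{\G},
\]
where $\T^c\subseteq\T$ denotes the full subcategory of compact objects. The first is easy: compactness is preserved under shifts, formation of cones, and direct summands, because the representable functor $\Hom{X}{-}$ out of a compact object $X$ is a cohomological functor that commutes with small coproducts. Since $\G\subseteq\T^c$ by hypothesis, we immediately obtain $\thick{\G}\subseteq\T^c$.

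For the reverse inclusion, the key preliminary observation is that the localizing subcategory $\Loc{\G}$ (the smallest triangulated subcategory of $\T$ containing $\G$ and closed under small coproducts) coincides with $\T$; this follows directly from the assumption that $\G$ generates $\T$. The strategy is then to exhibit every $X\in\T$ as a suitable cellular colimit of shifts of objects of $\G$. Concretely, one constructs inductively a tower
\[
0=X_0\longrightarrow X_1\longrightarrow X_2\longrightarrow\cdots
\]
together with an isomorphism $X\cong\operatorname{hocolim}_n X_n$, such that each transition fits into a distinguished triangle
\[
\textstyle\bigoplus_{\alpha\in I_n}G_\alpha[i_\alpha]\longrightarrow X_n\longrightarrow X_{n+1},\qquad G_\alpha\in\G,
\]
obtained by attaching coproducts of shifts of generators along a family of morphisms designed to exhaust the `remaining part' of $X$. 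This is the triangulated analogue of building a CW-complex by attaching cells.

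Now suppose $X\in\T^c$. Compactness combined with the defining triangle of the homotopy colimit yields $\Hom{X}{\operatorname{hocolim}_n X_n}\cong\colim_n\Hom{X}{X_n}$; hence the identity of $X$ factors through some $X_n$, so $X$ is a retract of $X_n$. The final step is to upgrade this to a retraction onto an object of $\thick{\G}$. One argues inductively on the level $n$: each coproduct $\bigoplus_{\alpha\in I_n}G_\alpha[i_\alpha]$ appearing in a cellular triangle is generally indexed by an infinite set and therefore need not lie in $\thick{\G}$, but any morphism from a compact object into such a coproduct factors through a finite sub-coproduct, which \emph{does} belong to $\thick{\G}$. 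Propagating this finiteness up the tower and tracking the section $X\to X_n$ produces an object $X_n'\in\thick{\G}$ of which $X$ is a direct summand; idempotent completeness of $\thick{\G}$ then forces $X\in\thick{\G}$.

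The main obstacle is this last step: transferring the compactness-driven factorization-through-finite-subcoproducts through every level of the cellular tower in a way that remains compatible with the retraction $X_n\to X$. This requires a careful simultaneous induction on the cellular level together with a systematic replacement of infinite coproducts by finite ones, and is the technical heart of Neeman's argument.
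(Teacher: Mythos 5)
The paper does not prove this statement at all: it is quoted from Neeman's 1992 paper as a black box, so there is no in-house argument to compare yours with, and your proposal has to stand on its own. Its overall shape is the standard one and much of it is fine: compact objects form an idempotent-complete triangulated subcategory, so $\thick{\G}\subseteq\T^c$; with the paper's definition of compact generation one has $\Loc{\G}=\T$ on the nose; the cellular tower $0=X_0\to X_1\to\cdots$ with cones $\bigoplus_{\alpha\in I_n}G_\alpha[i_\alpha]$ and $X\cong\operatorname{hocolim}_nX_n$ exists (note that verifying the comparison map to $X$ is an isomorphism already uses compactness of the objects of $\G$, not just the generation property); and compactness of $X$ makes the identity factor as $X\to X_n\to X$, so $X$ is a retract of $X_n$.

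The genuine gap is exactly the step you label the technical heart and then do not carry out: passing from ``$X$ is a retract of $X_n$'' to ``$X$ is a retract of an object of $\thick{\G}$''. As stated, ``propagating the finiteness up the tower while tracking the section'' does not work naively, because when you replace the attaching coproduct $B_n=\bigoplus_{\alpha\in I_n}G_\alpha[i_\alpha]$ by a finite subcoproduct $F$ and try to build a finite model mapping to $X_{n+1}$, the map you obtain is only a non-unique filler in a morphism of triangles, so the resulting composite need not agree with the map you started from. The correct inductive statement is not about the particular section $X\to X_n$ but the following: for every $n$, every morphism $\phi\colon C\to X_n$ from a compact object factors through some object of $\thick{\G}$. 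For the inductive step one uses compactness of $C$ to factor the composite $C\to X_{n+1}\to B_n[1]$ through $F[1]$ with $F$ a finite subcoproduct, completes $C\to F[1]$ to a triangle $F\to D\to C\to F[1]$ with $D$ compact, applies the inductive hypothesis to the induced map $D\to X_n$ to get a factorization through some $Y\in\thick{\G}$, and forms $Y'=\operatorname{cone}(F\to Y)\in\thick{\G}$ with a map $Y'\to X_{n+1}$ and a map $C\to Y'$. Because fillers are not unique, the composite $C\to Y'\to X_{n+1}$ may differ from $\phi$; but the difference is annihilated by the connecting map to $B_n[1]$, hence lifts to a map $C\to X_n$, to which the inductive hypothesis applies again, so that $\phi$ factors through $Y'\oplus Y''$ with $Y''\in\thick{\G}$. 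Applying this lemma to the section $X\to X_n$ exhibits the identity of $X$ as factoring through an object of $\thick{\G}$, and idempotent completeness of $\thick{\G}$ then gives $X\in\thick{\G}$. Without this ``difference'' argument (or an equivalent device) your outline asserts rather than proves the inclusion $\T^c\subseteq\thick{\G}$.
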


The following question is therefore natural: When are two compactly-generated triangulated categories $\S$ and $\T$ equivalent? Clearly, an equivalence
\[
    \S\stackrel{\sim}{\longrightarrow}\T
\]
induces an equivalence of triangulated categories between the corresponding subcategories of compact objects. However, additional structure seems to be necessary to prove a more general version of \Cref{thm:Rickards_Morita_Theorem_I}: to an object $X\in\T$ one may associate its graded endomorphism algebra
\[
    \textstyle\T(X,X)^\bullet\coloneqq\bigoplus_{i\in\ZZ}\T(X,\Sigma^i(X)),\qquad g*f\coloneqq \Sigma^j(g)\circ f,\ |f|=j,
\]
but the graded endomorphism algebra of a compact generator, say, need not determine $\T$ as a triangulated category.

Recall that a differential graded (dg, for short) category is a category enriched in complexes of $\kk$-modules. For example, dg algebras identify with dg categories with a single object. If $\A$ is a small dg category, then its derived category $\DerCat{\A}$ is a compactly generated triangulated category~\cite{Kel94}. The derived category of $\A$ has a salient property: it is equivalent to the stable category of a $\kk$-linear Frobenius exact category.\footnote{A Frobenius exact category is an exact category in the sense of Quillen that has enough projectives, enough injectives, and such that the classes of projectives and of injective objects coincide. Stable categories of Frobenius exact categories are triangulated by a well-known theorem of Happel~\cite{Hap88}, see also~\cite{Hel68}.} A triangulated category with the latter property is said to be \emph{algebraic}. The following theorem shows that all algebraic compactly generated triangulated categories are of this form; for missing definitions and more information on the theory of dg categories we refer the reader to~\cite{Kel06}.

\begin{theorem}[Keller's Recognition Theorem~{\cite{Kel94}}]
\label{thm:Kellers_Recognition}
    Let $\T$ be an algebraic compactly generated triangulated category. Then, there exists a small differential graded category and a $\kk$-linear equivalence of triangulated categories
    \[
        \T\stackrel{\sim}{\longrightarrow}\DerCat{\A}.
    \]
\end{theorem}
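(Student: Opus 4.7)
The plan is to construct $\A$ as a small dg category of compact generators equipped with its natural dg structure inherited from a dg enhancement of $\T$, and then to show that the associated dg Yoneda functor induces the desired equivalence. The overall strategy is a dg-enhanced version of the tilting argument (3)$\Rightarrow$(1) used in the proof of~\Cref{thm:Rickards_Morita_Theorem_III}.

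First, I would exploit the algebraicity hypothesis to fix a Frobenius exact category $\E$ together with a $\kk$-linear triangulated equivalence $\T\simeq\underline{\E}$. The category of chain complexes in $\E$ carries a natural dg structure (the standard dg Hom), and an appropriate variant of this construction---for instance, based on acyclic complexes of projective-injective objects of $\E$, possibly after enlarging $\E$ to ensure closure under small coproducts---produces a pretriangulated dg category $\T_{\mathrm{dg}}$ whose homotopy category is canonically equivalent to $\underline{\E}\simeq\T$. This is the dg enhancement I will work with.

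Next, I would choose a set $\G$ of compact generators of $\T$, lift each $G\in\G$ to an object $\tilde{G}\in\T_{\mathrm{dg}}$, and let $\A$ denote the full dg subcategory of $\T_{\mathrm{dg}}$ spanned by the $\tilde{G}$. I would then consider the dg Yoneda-type functor
\[
\Phi_{\mathrm{dg}}\colon\T_{\mathrm{dg}}\longrightarrow\dgDerCat{\A},\qquad X\longmapsto\bigl(\tilde{G}\mapsto\operatorname{Hom}_{\T_{\mathrm{dg}}}^{\bullet}(\tilde{G},X)\bigr),
\]
which sends an object to the dg $\A$-module it represents. Passing to homotopy categories yields a $\kk$-linear triangulated functor $\Phi\colon\T\to\DerCat{\A}$ that, by construction, carries each $\tilde{G}$ to the corresponding free dg $\A$-module $\A(-,\tilde{G})$. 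To see that $\Phi$ is an equivalence, I would verify: (i) $\Phi$ preserves small coproducts, using that the enhancement $\T_{\mathrm{dg}}$ is arranged to have coproducts compatible with those of $\T$; (ii) $\Phi$ is fully faithful on $\{\tilde{G}\}_{G\in\G}$, essentially by construction, since the morphism complexes in $\A$ are those in $\T_{\mathrm{dg}}$; (iii) the subcategories of $\T$ on which $\Phi$ is fully faithful are closed under shifts, cones, direct summands, and small coproducts in each variable, so that compactness of the $G\in\G$ and a standard d\'evissage extend fully faithfulness from $\{\tilde{G}\}_{G\in\G}$ to the whole of $\T$; (iv) essential surjectivity follows since the essential image of $\Phi$ is a triangulated subcategory of $\DerCat{\A}$ closed under small coproducts and containing the free dg $\A$-modules, which form a set of compact generators of $\DerCat{\A}$ by~\Cref{thm:Neeman_compacts}.

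I expect the main obstacle to lie in the foundational first step: constructing a dg enhancement whose homotopy category faithfully reproduces $\T$ as a triangulated category---its shift, its distinguished triangles, and its small coproducts---from the Frobenius data, in such a way that the morphism complexes compute the derived $\operatorname{Hom}$ needed to identify $\Phi$ with a dg Yoneda embedding. Once this enhancement is in hand, the remainder is a formal dg adaptation of the perfect-complex-to-dg-algebra construction already deployed in the proof of~\Cref{thm:Rickards_Morita_Theorem_III}.
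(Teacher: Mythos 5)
The paper states \Cref{thm:Kellers_Recognition} as a citation to Keller's paper and does not itself give a proof, so the only meaningful comparison is with Keller's original argument---and your sketch reproduces its essential structure: use algebraicity to obtain a dg enhancement (built from acyclic complexes of projective-injectives of a Frobenius model), take the full dg subcategory $\A$ on lifts of a set of compact generators, pass through the restricted dg Yoneda functor, and then verify the induced triangulated functor is an equivalence by full faithfulness on generators followed by a two-step d\'evissage (varying each argument, using compactness on both sides) and essential surjectivity via compact generation of $\DerCat{\A}$. The strategy is the right one and matches the way the paper itself later runs the $\text{(3)}\Rightarrow\text{(1)}$ step of \Cref{thm:Rickards_Morita_Theorem_III}.

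Two points you flag only in passing deserve more care in a full write-up, since they are where the real work sits. First, you need a Frobenius model $\E$ that is \emph{cocomplete} and such that $\E\to\underline{\E}$ sends coproducts to coproducts; the bare definition of ``algebraic'' does not hand this to you, and arranging it (for instance via the category of acyclic complexes of projective-injectives, or an ind-completion) is precisely what makes $\Phi$ coproduct-preserving and is therefore not optional. Second, your $\Phi_{\mathrm{dg}}$ as written lands in the dg category of all dg $\A$-modules, not yet in the derived dg category; what makes the composite with localisation behave correctly is that the chosen enhancement computes graded Homs already at the chain level, i.e.\ $H^{n}\Hom^{\bullet}_{\T_{\mathrm{dg}}}(\tilde G,\tilde X)\cong\T(G,\Sigma^{n}X)$ for \emph{every} $X$, not only for generators. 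With that property in place your steps (i)--(iv) go through exactly as in Keller's proof, and the d\'evissage in (iii) should be phrased as the usual two passes: first fix a compact generator $G$ and vary $Y$ over a localising subcategory (using compactness of $G$ and of the representable $\Phi(G)$ to commute with coproducts), then fix arbitrary $Y$ and vary $X$ (where only products into a fixed target are needed).

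So: the approach is correct and is essentially Keller's; the gaps are technical rather than conceptual, but both the cocompleteness of the Frobenius model and the strictness of the enhancement's Hom-complexes must be established, not merely assumed, for the argument to close.
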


\begin{remark}
\label{rmk:dga-not-unique}
   In the context of \Cref{thm:Kellers_Recognition}, the differential graded category $\A$ in general is not determined up to quasi-equivalence by $\T$ even after fixing a preferred set of compact generators of the  (but see~\cite{LO10,CS17,CS18,CNS22} for important results of this kind). When $\A$ has a single object, this problem is related to the classification of differential graded algebras with a given cohomology graded algebra up to quasi-isomorphism: A $\kk$-linear equivalence of triangulated categories
    \[
    \T\stackrel{\sim}{\longrightarrow}\DerCat{A},
    \]
    which identifies $G$ and $A$ and
    induces an isomorphism of graded algebras
    \[
    \textstyle\bigoplus_{i\in\ZZ}\T(G,\Sigma^i(G))\cong H^\bullet(A).
    \]
   Nonetheless, the derived Morita theory of small dg categories is completely understood thanks to the results in~\cite{Kel94}, see~\cite[Theorem~3.11]{Kel06} for a precise formulation of the analogue of \Cref{thm:Rickards_Morita_Theorem_I} in this context. Similar to \Cref{thm:invariance-HH}, it is also known that the Hochschild cohomology of a small dg category is a derived Morita invariant; other derived Morita invariants include algebraic $K$-theory, Hochschild homology, cyclic homology, etc. We refer the reader to~\cite[Section~5]{Kel06} for details and references to the literature.
\end{remark}

\begin{remark}
    A typical application of \Cref{thm:Kellers_Recognition} is in the context of Kontsevich's Homological Mirror Symmetry Conjecture~\cite{Kon98}, which predicts an equivalence between two algebraic triangulated categories (a suitable flavour of a Fukaya category and a bounded derived category of coherent sheaves on an algebro-geometric object or a variant thereof). We refer the reader to \cite{PZ98,AKO06,Sei15} for some representative instances of this paradigm.
\end{remark}

Let $\A$ be a small differential graded category. The derived category $\DerCat{\A}$ admits a canonical differential graded enhancement in the sense of~\cite{BK90}, that is a pre-triangulated differential graded category $\dgDerCat{\A}$ and a $\kk$-linear equivalence of triangulated categories
\[
    \DerCat{\A}\simeq H^0(\dgDerCat{A}).
\]
We say that a pair of small dg categories are \emph{derived Morita equivalent} if their derived dg categories are quasi-equivalent. The following theorem illustrates the advantages of working with dg enhancements of triangulated categories (see also~\cite{LRG22} where the more general case of well-generated pre-triangulated dg categories is treated).

\begin{theorem}[To{\"e}n's Derived Eilenberg--Watts Theorem~{\cite{Toe07}}]
\label{thm:dgEW}
    Let $\A$ and $\B$ be small differential graded categories. Then, there is a canonical isomorphism
    \[
    \dgDerCat{\A^\op\Lotimes[k]\B}\stackrel{\sim}{\longrightarrow}\RHom[c]{\dgDerCat{\A}}{\dgDerCat{\B}}
    \]
    in the homotopy category of differential graded categories~\cite{Tab05a}, where the right-hand side is the differential graded category of quasi-functors $\dgDerCat{\A}\rightsquigarrow\dgDerCat{\B}$ whose induced $\kk$-linear exact functor
    \[
    \DerCat{\A}\longrightarrow\DerCat{\B}
    \]
    preserves small coproducts.
\end{theorem}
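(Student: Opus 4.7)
The plan is to exhibit the desired isomorphism via the derived tensor product on one side and the universal property of $\dgDerCat{\A}$ as a free homotopy cocompletion of $\A$ on the other, reducing the result to the tautological identification of bimodules with dg functors into modules.

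First, I would construct a forward dg functor $\Phi\colon\dgDerCat{\A^\op\Lotimes[k]\B}\to\RHom[c]{\dgDerCat{\A}}{\dgDerCat{\B}}$ by sending a (cofibrant) dg bimodule $M$ to the quasi-functor $-\Lotimes[\A]M$. Since derived tensor product commutes with homotopy colimits, this lands in the subcategory of coproduct-preserving quasi-functors, and functoriality at the dg level (after cofibrant replacements) makes $\Phi$ into a morphism in the homotopy category of dg categories.

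Second, I would verify that $\Phi$ is an isomorphism by factoring it through restriction along the Yoneda embedding $Y\colon\A\to\dgDerCat{\A}$. The crucial input is the universal property
\[
Y^\ast\colon\RHom[c]{\dgDerCat{\A}}{\dgDerCat{\B}}\stackrel{\sim}{\longrightarrow}\RHom{\A}{\dgDerCat{\B}},
\]
a quasi-equivalence reflecting that $Y(\A)$ is a set of compact generators of $\dgDerCat{\A}$: every coproduct-preserving quasi-functor out of $\dgDerCat{\A}$ is determined, up to homotopy, by its restriction to $Y(\A)$, and any quasi-functor $\A\rightsquigarrow\dgDerCat{\B}$ admits an essentially unique cocontinuous extension. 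Next, unwinding definitions, the target $\RHom{\A}{\dgDerCat{\B}}$ is itself quasi-equivalent to $\dgDerCat{\A^\op\Lotimes[k]\B}$: a quasi-functor from (a cofibrant replacement of) $\A$ into dg $\B$-modules is precisely the data of a dg $\A^\op\otimes\B$-module via the correspondence $a\mapsto M(a,-)$. Composing these two quasi-equivalences gives a quasi-inverse to $\Phi$, since the composite sends $M$ to $a\mapsto Y(a)\Lotimes[\A]M\simeq M(a,-)$, recovering $M$. Conversely, for a cocontinuous quasi-functor $F$, the associated bimodule $M_F$ produced by restriction satisfies $-\Lotimes[\A]M_F\simeq F$, because both agree on representables by construction and both are cocontinuous, while $\dgDerCat{\A}$ is generated from $Y(\A)$ under coproducts and cones.

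The main obstacle is carrying out this argument coherently at the level of the homotopy category of dg categories, rather than merely at the triangulated level. On triangulated categories the "extend by colimits" reconstruction is an elementary Morita-type argument, but promoting it to a quasi-equivalence inside Tabuada's model structure requires Toën's explicit model for the internal Hom of dg categories, careful cofibrant and fibrant replacements, and a homotopy-coherent statement of Yoneda's free-cocompletion property; simultaneously controlling these ingredients is the technical heart of the theorem and is precisely what Toën's paper is devoted to establishing.
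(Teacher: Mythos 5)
The paper does not prove this theorem; it is quoted as a black box from To\"en's paper \cite{Toe07}, so there is no internal proof to compare against. That said, your sketch accurately reproduces the skeleton of To\"en's actual argument: the two lemmas you isolate are exactly To\"en's Theorem~7.1 (the identification $\RHom{\A}{\dgDerCat{\B}}\simeq\dgDerCat{\A^\op\Lotimes[\kk]\B}$, deduced from his mapping-space computation in the homotopy category of dg categories) and Theorem~7.2 (the free-cocompletion statement that restriction along Yoneda is a quasi-equivalence $\RHom[c]{\dgDerCat{\A}}{\dgDerCat{\B}}\to\RHom{\A}{\dgDerCat{\B}}$), and your forward functor $M\mapsto -\Lotimes[\A]M$ is the map To\"en exhibits. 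You also correctly locate the hard part: none of this is difficult at the triangulated level, and the real content is making the Yoneda free-cocompletion property and the bimodule/quasi-functor dictionary hold coherently inside Tabuada's model structure, which requires To\"en's explicit model for $\RHom{-}{-}$ and his computation of mapping spaces of dg categories. Two small caveats for a genuine write-up: (i) you should say what ``quasi-functor'' means precisely (a right-quasi-representable bimodule) so that the claim $Y(a)\Lotimes[\A]M\simeq M(a,-)$ is not circular; and (ii) establishing that $\Phi$ is a morphism in $\operatorname{Ho}(\mathrm{dgcat})$ at all, rather than just an assignment on objects, already needs the internal-Hom machinery and cannot be done ``by hand'' with cofibrant replacements alone. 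Neither of these is an error in your plan, but they are where a careless execution would collapse to a merely triangulated statement.
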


Rickard's \Cref{thm:symmetric,thm:TA} have the following generalisations for dg algebras (with many objects). Let $A$ be a dg algebra over a field $\kk$, and suppose that $A$ is locally proper, which is to say that its cohomology is degree-wise finite dimensional. Following Kontsevich~\cite{Kon93} (see also~\cite{KS06a}, we say that $A$ is \emph{bimodule right $n$-Calabi--Yau}, $n\in\ZZ$, if there exists an isomorphism $A[n]\cong DA$ in the derived category of dg $A$-bimodules. Notice that this property is invariant under derived Morita equivalence. The (bimodule right $n$-Calabi--Yau) cyclic completion $A\ltimes DA[1-n]$ in the sense of Segal~\cite{Seg08} (see also~\cite{Sei08a,Sei10}) gives an analogue of the passage to the trivial extension of a finite-dimensional algebra. These kinds of shifted trivial extensions play a central role in the construction of cluster categories of finite-dimensional algebras of finite global dimension~\cite{Kel05,Ami09,Guo11}, and also in the recent classification of Brauer algebras up to derived equivalence by Opper and Zvonareva~\cite{OZ22}.

The Morita invariance of preprojective algebras of hereditary finite-dimensional algebras has the following analogue for dg algebras. Still working over a field $\kk$, suppose now instead that the dg algebra $A$ is homologically smooth, that is the diagonal dg $A$-bimodule is compact. Following Kontsevich and Ginzburg~\cite{Gin06}, we say that $A$ is \emph{bimodule left $n$-Calabi--Yau}, $n\in\ZZ$, if there exists an isomorphism
\[
    \RHom[A^e]{A}{A^e}\cong A[-n]
\]
in the derived category of dg $A$-bimodules. Notice that this property is also invariant under derived Morita equivalence. For a homologically smooth dg algebra $A$, Keller's derived $n$-preprojective dg algebra $\mathbf{\Pi}_n(A)$ is bimodule left $n$-Calabi--Yau and the passage from a homologically smooth dg algebra to its derived $n$-preprojective algebra perserves derived Morita equivalences~\cite{Kel11}. This construction, which generalises the construction of the Ginzburg dg algebra of a quiver with potential~\cite{Gin06}, again plays a central role in the construction of cluster categories and also in Iyama's higher-dimensional version of Auslander--Reiten Theory~\cite{IO13,HIO14}. For applications in derived algebraic geometry see for example~\cite{BCS24}, and for applications in symplectic geometry see for example~\cite{EL17a}.

The previous results admit topological analogues. Following~\cite{SS03,Sch10a}, call a compactly generated triangulated category $\T$ \emph{topological} if it is equivalent, as a triangulated category, to the homotopy category of a pointed (closed) Quillen model category for which the suspension functor is an equivalence on its homotopy category. The most fundamental example is the homotopy category of spectra~\cite{BF78}, which is compactly generated by the sphere spectrum, see for example~\cite[Section~2.3]{SS03}.

\begin{theorem}[Schwede--Shipley's Recognition Theorem~\cite{SS03}]
\label{thm:SS_Recognition}
    Let $\T$ be a topological compactly generated triangulated category. Then, there exists a small spectral category $\A$, that is a category enriched in symmetric spectra~\cite{HSS00}, and an equivalence of triangulated categories
    \[
        \T\stackrel{\sim}{\longrightarrow}\DerCat{\A}
    \]
    between $\T$ and the derived category of $\A$.
\end{theorem}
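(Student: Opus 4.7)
The plan is to follow the blueprint of Keller's Recognition Theorem (\Cref{thm:Kellers_Recognition}), replacing differential graded categories with spectral categories and derived categories with stable model categories of spectral modules. By hypothesis, $\T$ is the homotopy category of a pointed Quillen model category $\M$ whose suspension functor $\Sigma\colon\Ho{\M}\to\Ho{\M}$ is an equivalence. Up to passing to a Quillen equivalent model, we may arrange that $\M$ is enriched, tensored, and cotensored over the category of symmetric spectra in such a way that the resulting mapping spectra represent the derived morphism sets of $\T$; this is the role played by framings in the sense of Hovey, and it is here that the topological hypothesis is used in its strongest form.

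Next, I would fix a small set $\G\subseteq\T$ of compact generators and choose cofibrant-fibrant representatives of its elements in $\M$. Applying the spectral enrichment to these representatives yields a small spectral category $\A$ whose set of objects is $\G$ and whose morphism spectra $\A(G,G')=\operatorname{map}_\M(G,G')$ satisfy
\[
\pi_n\A(G,G')\cong \T(G,\Sigma^n G'),\qquad n\in\mathbb Z.
\]
The construction of $\A$ is the key technical input and is the place I expect the main obstacle: one must produce a coherent spectral enrichment out of only the model-categorical data, and check that it is compatible with the triangulated structure on $\T$.

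With $\A$ in hand, I would construct a ``singular spectrum'' functor
\[
S_\G\colon\M\longrightarrow\operatorname{Mod}(\A),\qquad X\longmapsto\bigl(G\mapsto\operatorname{map}_\M(G,X)\bigr),
\]
which is right Quillen with respect to the stable projective model structure on spectral $\A$-modules; its left adjoint $-\otimes_\A\G$ sends the free $\A$-module at $G$ to (a cofibrant replacement of) the object $G$ in $\M$. Passing to homotopy categories yields an exact functor $\T\to\DerCat{\A}$ that, by construction, sends each $G\in\G$ to the corresponding representable $\A$-module and induces isomorphisms on graded morphisms between such objects.

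Finally, I would upgrade this local equivalence to a global one. Both $\T$ and $\DerCat{\A}$ are compactly generated, the former by $\G$ by assumption and the latter by the representable $\A$-modules in view of \Cref{thm:Neeman_compacts}. Since $S_\G$ preserves small coproducts and exact triangles, the full subcategory of $\T$ on which it is fully faithful is closed under coproducts and triangles, hence equals $\T$; and since the essential image contains a set of compact generators of $\DerCat{\A}$ and is closed under coproducts and triangles, the functor is essentially surjective. The resulting triangulated equivalence $\T\stackrel{\sim}{\to}\DerCat{\A}$ then completes the argument.
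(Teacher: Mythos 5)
The paper itself gives no proof of this statement: it is quoted as a black box from Schwede--Shipley \cite{SS03}, so your sketch has to be measured against their original argument. Measured that way, your outline follows the same route: fix cofibrant--fibrant representatives of a set of compact generators, form their endomorphism spectral category $\A$, set up the mapping-spectrum/tensor Quillen adjunction between the model $\mathcal{M}$ and spectral $\A$-modules, and conclude by the localizing-subcategory argument, using compact generation of $\T$ on one side and of $\DerCat{\A}$ by the free modules (\Cref{thm:Neeman_compacts}) on the other. This is exactly the strategy of \cite{SS03} (who in fact prove more, namely a chain of Quillen equivalences between $\mathcal{M}$ and $\Mod{\A}$, of which the triangulated equivalence stated here is the shadow), and it is also the topological counterpart of the proof of \Cref{thm:Kellers_Recognition}.

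The genuine gap is your first step. Framings in the sense of Hovey only make $\Ho{\mathcal{M}}$ a module over the homotopy category of spectra, i.e.\ an action defined up to homotopy; they do not produce a tensored and cotensored enrichment of (a model Quillen equivalent to) $\mathcal{M}$ in symmetric spectra, and without a genuine enrichment there is no spectral category $\A$ and no spectral module category to map into. Upgrading the homotopy-level action to a coherent enrichment is precisely the technical heart of \cite{SS03}: they replace $\mathcal{M}$ by a category of symmetric spectrum objects internal to $\mathcal{M}$ (Hovey's stabilization machinery) and for this they impose standing hypotheses --- pointed, simplicial, cofibrantly generated, proper --- which are not part of the definition of ``topological'' used in this survey and cannot simply be ``arranged up to Quillen equivalence'' without argument. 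So the sentence ``we may arrange that $\mathcal{M}$ is enriched, tensored, and cotensored over symmetric spectra'' is not a harmless normalization but an assertion of the hardest ingredient of the theorem; flagging it as ``the main obstacle'' does not discharge it. Two smaller points you should make explicit: the computation $\pi_*\A(G,G')\cong\T(G,\Sigma^{*}G')$ (up to the usual indexing convention) and the fact that the derived functor $S_\G$ preserves coproducts both use compactness of the chosen generators and the fibrancy/cofibrancy of their representatives; and full faithfulness requires running the localizing-subcategory argument in both variables, first in the source variable against a fixed generator, using that the free $\A$-modules are compact in $\DerCat{\A}$.
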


\begin{remark}
    Similar to the algebraic case, the spectral category $\A$ in \Cref{thm:SS_Recognition} in general is not determined up to (Dwyer--Kan) equivalence by $\T$ and a preferred set of compact generators of the latter (but see~\cite{Sch01,SS02,Sch07} for important positive results of this kind). When $\A$ has a single object, this question is related to the problem of classifying the ring spectra with a given homotopy graded algebra: An equivalence of triangulated categories
    \[
        \T\stackrel{\sim}{\longrightarrow}\DerCat{A},
    \]
    which send $G$ to $A$ and induces an isomorphism of graded algebras
    \[
        \textstyle\bigoplus_{i\in\ZZ}\T(G,\Sigma^i(G))\cong\bigoplus_{i\in\ZZ}\pi_{-i}(A),
    \]
    compare with~\Cref{rmk:dga-not-unique}.
\end{remark}

\begin{remark}
    \Cref{thm:SS_Recognition} can also be formulated in terms of Lurie's stable $\infty$-categories~\cite[Theorem~7.1.2.1]{Lur17} and a version~\Cref{thm:dgEW} is also available in this context~\cite[Proposition~7.1.2.4]{Lur17}, see also~\cite{Hov15}.
\end{remark}

\begin{remark}
    For the sake of completeness, we mention that the homotopy theory of DG categories is equivalent to that of spectrally enriched categories that are linear over the Eilenberg--Mac Lane ring spectrum of the ground commutative ring~\cite{Mur15} (see also~\cite{Shi07}). Consequently, every algebraic triangulated category is topological. However, not every topological triangulated category is algebraic, as exemplified by the the stable homotopy category~\cite{Sch10a}, and distinguishing between topological and algebraic enhancements of a triangulated category is a subtle problem (see for example~\cite{DS07}).
\end{remark}

\printbibliography












\end{document}